\newtheorem*{rep@theorem}{\rep@title}
\newcommand{\newreptheorem}[2]{%
\newenvironment{rep#1}[1]{%
 \def\rep@title{#2 \ref{##1}}%
 \begin{rep@theorem}}%
 {\end{rep@theorem}}}
\newtheorem{thm}{Theorem}[section]
\newtheorem*{thm*}{Theorem}
\newtheorem{lem}[thm]{Lemma}
\newtheorem{defn}[thm]{Definition}
\newtheorem*{defn*}{Definition}
\newtheorem{prop}[thm]{Proposition}
\newtheorem{prop*}{Proposition}
\newtheorem{cor}[thm]{Corollary}
\newtheorem{rem}[thm]{Remark}
\newtheorem{conj}[thm]{Conjecture}
\theoremstyle{remark}
\newcommand{\Z}{\mathbb{Z}}
\newcommand{\I}{\mathbb{I}}
\newcommand{\la}{\leftarrow}
\newcommand{\ra}{\rightarrow}
\newcommand{\lra}{\longrightarrow}
\newcommand{\cofra}{\rightarrowtail}
\newcommand{\rcong}{ \begin{sideways}$\cong$\end{sideways}}
\newcommand{\Fun}{\mathrm{Fun}}
\newcommand{\Map}{\text{Map}}
\DeclareMathOperator{\Top}{Top}
\DeclareMathOperator{\Hom}{hom}
\newcommand{\join}{\ast}
\newcommand{\smsh}{\wedge}
\newcommand{\ox}{\otimes}
\newcommand{\x}{\times}
\DeclareMathOperator{\smcoprod}{\amalg}
\DeclareMathOperator{\holim}{holim}
\DeclareMathOperator{\colim}{colim}
\DeclareMathOperator{\hocolim}{hocolim}
\newcommand{\T}{\mathrm{T}}
\renewcommand{\P}{\mathrm{P}}
\newcommand{\X}{\mathcal{X}}
\newcommand{\ycal}{\mathcal{Y}}
\newcommand{\red}{\textrm{red}}
\newcommand{\nil}{\textrm{nil}}
\newcommand{\gscr}{\mathscr{G}}
\newcommand{\ccal}{\mathcal{C}}
\newcommand{\dcal}{\mathcal{D}}
\title{Goodwillie Calculus via Adjunction and LS Cocategory} 
\author{Rosona Eldred}
\email{eldred@uni-muenster.de}
\address{Fachbereich Mathematik \\
University of M\"{u}nster\\
Einsteinstr. 62\\
48149 M\"{u}nster\\
Germany
}
\begin{document}

\begin{abstract} 
{\footnotesize
In this paper, we establish a new monadic structure on the intermediate constructions, $\T_n F$, of Goodwillie's calculus of functors. We show that as a result these functors  take values in spaces of Hopkins' symmetric Lusternik-Schnirelman(LS)cocategory $\leq n$, which is an upper bound on the homotopy nilpotence class of the space. This property allows us to extend results Biedermann-Dwyer linking Goodwilie Calculus to homotopy nilpotence and of Chorny-Scherer on the vanishing of Whitehead products for spaces which are values of $n$-excisive functors. 

We also use a dual form of our adjunction to give a rigorous formulation of homotopy functor analog of McCarthy's Dual Calculus, where $n$-co-excisive functors take certain pullback cubes to pushout cubes, and dualize our results of calculus and LScocategory to dual calculus and LScategory.
 }
 \end{abstract}

\maketitle

%
%
%
\section{Introduction}%
This paper gives a unified treatment of two important areas of algebraic topology:  Goodwillie's calculus of functors and Lusternik-Schnirelmann(LS) cocategory, giving new insights into both fields and deepening the connection between functor calculus and homotopical nilpotency.  Goodwillie calculus provided one of the important tools for calculating algebraic K-theory and has also been used extensively to make progress in calculating stable homotopy groups of spheres.  LS category is a numerical homotopy invariant of a space $X$ roughly the minimal size of a certain kind of covering, it is a lower bound for the number of critical points of a function on $X$ and has been more recently used in formulations of topological complexity; there are a myriad of duals, all called LS cocategory.   

We establish that the functor calculus approximates a functor $F$ by one taking values in spaces of finite LS cocategory. This follows from proving that the first-order polynomial approximations $T_nF$ have a natural decomposition as $T_nF \simeq R_n F L_n$ where $(R_n, L_n)$ are an adjoint pair of functors. This allows us to rigorously formalize a dual calculus where functors are $n$-co-excisive (rather than $n$-co-additive, as in the original formulation of McCarthy) by dualizing the adjoint pair and defining $T^nF:= L^n FR^n$.

To state our results more precisely, we need to give a small overview of LS (co)category and Goodwillie's functor calculus. A more thorough background can be found in Section \ref{sec:bckgd}. 

The LS category of a space $X$, denoted LScat(X), is 
defined as one less than the minimal number of open sets contractible in $X$ needed to cover $X$ \cite{LS}. 
The direct definition in terms of coverings is not very practical. 
Alternate approaches arose, and those of  Ganea\cite{GaneaLS} and later Hopkins\cite{hopkins} were of a similar form: for each space $X$, they each provided a construction $G_nX$ and natural maps $g_n: G_nX \ra X$  such that LScat(X) $\leq n$ if $g_n$ had a homotopy section.  The duals, 
LScocat(X), are less than or equal to $n$ when a natural map $g^n: X \ra G^nX$ has a homotopy retract. 

The key insight which led to this paper was that  Hopkins's definition of symmetric LScocat can be equivalently stated using constructions of Goodwillie calculus. 
 
 We refer to a  weak-equivalence preserving functor $F$ as a \textit{homotopy functor}. Let $F$ be a homotopy functor of spaces 
 that are at least 0-connected. Excisive functors take homotopy pushout squares to homotopy pullback squares and $n$-excisiveness is a generalization of this in terms of $(n+1)$-cubical diagrams. Goodwillie's calculus then provides a tower of $n$-excisive approximations to $F$, $P_n F$, for $n \geq 0$.
\[
\cdots \ra \P_n F \ra \P_{n-1} F\ra \cdots \ra \P_1 F \ra \P_0 F,
\]
whose homotopy limit is denoted $\P_\infty F$. 
 
We denote by $\mathscr{P}([n])$ the power set on $[n]=\{0, \ldots, n\}$ and $\mathscr{P}_0([n])$ the nonempty subsets.
For a homotopy functor $F$, let $\T_n F(X):= \holim_{U \in  \mathscr{P}_0 ([n])} F(U \join X)$; here, $\ast$ denotes topological join. There is a natural map $t_n: F(X) \ra \T_n F(X)$ induced by inclusion of the empty set.  Then  $\P_n F$ is defined as the homotopy colimit over a directed system of these finite homotopy limit constructions along the iterated  $t_n$'s:
%
$\P_n F(X):= \hocolim (\T_n F(X) \ra \T_n^2 F(X) \ra \cdots).$ When $n=1$, $T_1F \simeq \Omega F \Sigma$ and $P_1 F \simeq \Omega^\infty F \Sigma^\infty$. 

\subsection{Statements of results} 
In this language, we have the following reformulation, where $\mathbb{I}$ is the identity functor of spaces. 
\begin{prop} \label{def:LScocatTn}
A space $X$ has symmetric-LScocategory less than or equal to $n$ if and only if $X$ is a homotopy retract of $\T_n \I (X)$. 
\end{prop}

That is, the $\T_n\I$'s should be seen as the classifying objects of this property of having symmetric LScocat $\leq n$.  Moreover, we establish the following, which are corollaries of Theorem \ref{thm:main}. The proofs will be given in section \ref{sec:lscocatproofs}.

\begin{repcor}{cor:sym}
For each $n \geq 1$, the functor $\T_nF$ takes values in spaces of symmetric LS cocat $\leq n$, as do the $\P_nF$.   
\end{repcor}

A space has Whitehead length $n$ if, for all $(n+1)$-tuples of elements in $\pi_*X$, $(x_0,x_2,\ldots, x_n)$, their iterated Whitehead product, denoted $[x_1,[x_2,[\cdots x_n]\cdots ]$, vanishes. 

\begin{repcor}{cor:WL}
For every space $X$ and $n\geq 1$, the spaces $\T_nF(X)$ and $\P_n F(X)$ have Whitehead length $n$.
\end{repcor}

We refer to a functor $F$ as \textit{strongly reduced} when $F$ of a point is a point and use Goodwillie's terminology of \textit{reduced} for when $F$ of a point is homotopic to a point.  Functors which are strongly reduced include those which are basepoint-preserving homotopy functors of based, connected topological spaces, the subject of study in \cite{Bied-Dwy}. 

\begin{repthm}{thm:main}
Let $F$ be a reduced homotopy endofunctor of topological spaces. For all $n\geq1$, there are adjoint functors  $R_n, L_n$  such that $\T_n F$ is weakly equivalent to $R_n F L_n$. For $F$ strongly reduced, we have that $\T_n F=R_n F L_n$. In particular, $\T_n \I = R_nL_n$, i.e. has the structure of a monad for all $n$. 
\end{repthm}

To state our adjunctions, we need two categories of functors, one of punctured cubical diagrams of spaces and the subcategory of \textit{reduced} punctured cubical diagrams of spaces. We say $\X \in$ Fun$(\mathscr{P}_0 ([n]), \Top) = \Top^{\mathscr{P}_0 ([n])}$ is \textit{reduced} if $\X(S)$ is a point, for all $S$ with $|S|=1$. We denote the subcategory of reduced $\mathscr{P}_0([n])$-diagrams in spaces by $\widetilde{\mathrm{Fun}}(\mathscr{P}_0([n]), \Top)$.

We use a common model for the homotopy limit of a $\mathscr{P}_0 ([n])$-diagram $\X$ in $\Top$ (see section \ref{sec:bckgd} for details) whose natural left adjoint takes a space $Y$ to the cube $S\mapsto Y \x \Delta^S$, where $\Delta^S := \Delta^{|S|-1}$. 

The adjoint pair of the above theorem is then given by 
\[
L_n := \text{red} \circ (S\mapsto - \x \Delta^S )\text{  and  }R_n := \holim \circ \text{ inc};
\]  
The functor \text{red} takes an unreduced diagram to one where the singelton-indexed spaces are forced to be points by a colimit construction which then propogates this change through the rest of the diagram, collapsing the images of the singleton-indexed spaces. Here, inc is inclusion of a subcategory. We will elaborate on these in section \ref{sec:genproof}.

We follow the normal convention of left adjoints being written as the top arrow of each pair:
\[
L_n:
\xymatrix{
\Top \ar@<+5 pt>[rr]^-{S\mapsto -\x \Delta^S }&& \ar[ll]^-{\holim} \Fun (\mathscr{P}_0([n]), \Top) \ar@<+5 pt>[r]^{\text{red}} & \ar[l]^{\text{inc}}\widetilde{\Fun} (\mathscr{P}_0([n]), \Top) \\
}
:R_n
\]

\noindent We also establish that the adjunction is a Quillen adjunction. 
%
%
%

We take $R^n:=R_n^{op}$, hocolim precomposed with an inclusion, and present its adjoint, which we will call $L^n$. 

We use the category $\Fun(\mathscr{P}^1([n]), \Top)$ of copunctured (with the final space removed) diagrams of spaces and the full subcategory  $\utilde{\Fun}(\mathscr{P}^1([n]), \Top)$ of \textit{co-reduced} copunctured cubical diagrams of spaces. That is, each diagram $\X \in \utilde{\Fun}(\mathscr{P}^1([n]), \Top)$ has the property that $\X([n]-S)$ is contractible, for $|S|=1$.  Dual to the previous case, there is a co-reduction functor, \textit{cor}, that takes a copunctured cube to a coreduced one and we can send $X$ to a co-punctured diagram of spaces via $X \mapsto ([n]-S \mapsto X^{\Delta^S})$. Then, we will prove in section \ref{sec:dualproofs} that  \\

 \begin{repprop}{prop:dualadj}
 For each $n \geq 1$, there are adjunctions between these categories as follows:
\[
L^n:
\xymatrix{
\utilde{\Fun}(\mathscr{P}^1 ([n]), \Top) \ar@<+3 pt>[r]^-{inc}& \ar@<+3pt>[l]^{cor}  \Fun (\mathscr{P}^1([n]), \Top) \ar@<+3 pt>[rr]^-{\hocolim} & &\ar@<+3 pt>[ll]^-{X \mapsto ([n]-S \mapsto X^{\Delta^S})} \Top \\
}:R^n
\]
 \end{repprop}

LScategory is a better understood invariant of a space than its dual, LScocategory, and  would naturally tie in to a dual formulation of Goodwillie's calculus. One can consider the Eckmann-Hilton dual theory: co-excisive functors take homotopy pullback squares to homotopy pushout squares and higher n-co-excisive functors satisfying a more general condition involving $(n+1)$-cubes.  An n-co-additive version of this was first developed by Randy McCarthy \cite{Dual}. A dual tower for a functor $F$ has stage which are functors that have natural transformations \textit{to} $F$. 
McCarthy originally constructed the dual calculus as a way to gain an approximation to $K$ theory with a natural transformation into it.  
 
Since the dual calculus \cite{Dual} was defined before only in a (co)triple way, there was, before this paper, no $\T^n F$ which one iterates to produce $\P^nF$, the $n$-co-excisive approximation to a functor. We use our dual adjunction $R^n, L^n$ and define $\T^n F$ in an analogous way as for $\T_n F$.

\begin{repdefn}{defn:dualadj}
Given our adjoint pair $R^n, L^n$, for all $n\geq1$, we define $\T^n F:= L^n FR^n$; $T^n \I$ is then the comonad $L^n R^n$. There is a natural map $t^nF: \T^n F \ra F$ which is the map from a hocolimit of a co-punctured diagram to its final entry. 
\end{repdefn}

In this language, we may re-state another of Hopkins's definitions as 

\begin{repprop}{defn:symcat}
For a space $X$, symmetric LS cat(X) $\leq n$ if and only if  the natural map $\T^n\I (X)\ra X$ has a section up to homotopy.
\end{repprop}

Most of the Eckmann-Hilton duals of the previous results listed hold, with Whitehead product replaced by cup product, LScocat replaced by LScat, and calculus constructions replaced by their duals. We provide the statements and their proofs in sections \ref{sec:dualproofs} and \ref{sec:lscat} and further discussion of the dual calculus in section \ref{sec:bkDGC}.

We hope that this connection between Goodwillie calculus and LScategory will lead to a tangible connection between calculus of variations-- the field from which the notion of LScategory originates-- and Goodwillie calculus. 

\newpage

\subsection{Conjectures and partial results} 
Theorem \ref{thm:main} has a logical conjecture following it, which we believe to be true (and are working towards establishing): 

\begin{conj}
There is an appropriate category $\ccal$ which each  $\widetilde{\Fun} (\mathscr{P}_0([n]), \Top)$ maps to and adjunctions
\[
L_n^\infty: 
\xymatrix{
\Top \ar@<+5 pt>[r] & \ar[l] \ccal 
}
:R_n^\infty 
\]
 for each $n$ , such that $P_nF \sim R_n^\infty F L_n^\infty$. 
\end{conj}
\noindent Here $R_n^\infty, L_n^\infty$ should relate to $R_n, L_n$ in the way $\Omega^\infty, \Sigma^\infty$ relate to $\Omega, \Sigma$.  This additionally would mean that whenever $F$ is a monad, $P_nF$ is a monad for all $n$.

To state the next corollary, we need a few definitions involving notions of nilpotence. 
A group $G$ is \textit{nilpotent of class  $\leq n$} when all commutators of length $> n$ are trivial. Nilpotence class 1 is abelian; nilpotence class is a measure of how close a group is to abelian. %
%
Note that each loop in $\Omega X$ has a homotopy inverse, so we can consider length n \textit{homotopy} commutators of loops. We say that $\nil (\Omega X) \leq n$ if all homotopy commutators of length $>n$ are nullhomotopic.  Examples of spaces with $\nil (\Omega X) \leq 1$ are two-fold loopspaces. 

Goodwillie calculus has played a role in providing another notion of nilpotence for spaces, which we extend in this paper. Biedermann and Dwyer \cite{Bied-Dwy} constructed Lawvere-style theories $\gscr_n$  for each $n$ from $\Omega P_n \I$. They showed that $\Omega P_n F$ take values in spaces which are homotopy algebras over these theories; they called these homotopy algebras \textit{homotopy $n$-nilpotent groups}. They assert that the values of functors of the form $\Omega P_n F$ are the only homotopy algebras over $\gscr_n$; the proof is left to \cite{unBiedDwy}. The homotopy $1$-nilpotent groups are infinite loopspaces. If $X$ is a homotopy $n$-nilpotent group in this sense, we will say that $\gscr_n\nil (X) \leq n$.  This is an upper bound on $\nil (\Omega X)$.  Being a homotopy $n$-nilpotent group  is much more than a number, but we do not make use of the extra structure.

\begin{conj}\label{cor:nil-not-BD}
For $F$ such that $\T_n F$ is not $n$-excisive and for each $X$, $\T_nF$ takes values in spaces which are classically nilpotent  but not nilpotent in the sense of Biedermann and Dwyer. That is, $\nil (\Omega \T_n FX) \leq n$ but $\mathscr{G}_n\nil (\T_n FX)$ is not. 
\end{conj}


Classical nilpotence follows from the inequalities of (\ref{ineq-nil}) and the fact that $\T_n F$ take values in spaces of symm LScocat $\leq n$.
There is an equivalence of categories \cite{unBiedDwy} 
\[
\{\textrm{values of functors } \Omega F, \; F\; n-\textrm{excisive} \} \sim \{\textrm{homotopy $n$-nilpotent groups}\}.
\]

\noindent Under our hypotheses,
%
$\T_n F$ is not $n$-excisive unless $F$ is (i.e. unless it equals $\P_n F$),  which prevents
$\Omega \T_n FX$ from being equivalent to $\Omega \P_nF X$ for $X$ any space.  

The trouble at this point (which was communicated to me by Clemens Berger) is that this does  not guarantee that there does not exist, for each $X$, an $n$-excisive functor $G$ and space $Y$ such that $\Omega \T_n F X \simeq \Omega \P_n FY$. 

In answer to the question of a reviewer about the original hypotheses of Conjecture \ref{cor:nil-not-BD}, we also establish the following result. The proof is 
in Section \ref{sec:lscocat}. 
For a thorough definition of analyticity, see Section \ref{sec:bckgd}.

\begin{repprop}{prop:radconv}
Let F be a $\rho$-analytic homotopy endofunctor of spaces for some $\rho\geq 0$. Assume that there is also an $n > 0$ such that $\T_nF$ is $n$-excisive. Then $F(X)$ is weakly equivalent to $\P_n F(X)$ for all $X$ of connectivity $\geq \rho$. 
 \end{repprop}

%
%
%
%
%
%

\subsection{Related work} 
The vanishing of iterated Whitehead products for values of $\Omega \P_n F$  (part of Corollary \ref{cor:WL}) can also be seen by direct computations of Chorny and Scherer \cite[Theorem 2.1]{chorny-scherer}. 
Related to Corollary \ref{cor:sym}--the author has become aware that independently, Christina Costoya, J\'{e}r\^{o}me Scherer and Antonio Viruel have shown that the $\P_n F$'s take values in spaces with \textit{inductive} (i.e. Ganea) cocategory $n$ \cite{csv}. We recover this result by combining Cor \ref{cor:sym} and the inequality that for a space $X$, inductive LScocat(X) is less than or equal to symmetric LScocat(X) of \cite{hopkins}.
We also point out that the constructions of Ganea used in the definition of inductive LS category were proven by Deligiannis to have the structure of comonads \cite{comonads}. Our proof is necessarily significantly different than a dualization of this result, as we lack an inductive definition of the $\T_n$'s. 

\subsection{Organization} 
The paper is organized as follows. Section \ref{sec:bckgd} provides background on cubical diagrams, ho(co)limits, Goodwillie calculus and structures such as being a left/right M-functor for M a monad.   There are two main blocks of results, one for the usual calculus (section \ref{sec:proofs}) and one for the dual calculus (sections \ref{sec:dualproofs} and \ref{sec:bkDGC} ), each followed by a section ( \ref{sec:lscocat} and \ref{sec:lscat}, respectively) explaining the relationship with LS(co)category and giving proofs of the relevant corollaries. 

\subsection{Acknowledgements}
The author would like to thank Tom Goodwillie for help in simplifying the construction of the adjoint functors, and Bill Dwyer for many discussions about nilpotence and calculus of functors. 

%
%
%
\section{Background}\label{sec:bckgd}
This section contains a variety of information useful for non-experts. We first introduce terminology about cubical diagrams in section \ref{sec:cubes} and descriptions of our models for ho(co)lim of (co)punctured cubes in section \ref{sec:ho(co)lim}. These are necessary for the following constructions of Goodwillie calculus in section \ref{sec:bkGC}. 
We introduce the definition of a monad $M$, as well as left/right $M$-modules and the functor analog, left/right $M$-functors in section \ref{sec:bkMod}.We leave a discussion of the dual calculus for section \ref{sec:bkDGC}. 


\subsection{Cubes and cubical diagrams}\label{sec:cubes}

We take $\Delta$ to be the category of finite ordered sets and monotone maps, with elements $[n] =\{0,1, \ldots n\}$. If $S$ is a finite set, we denote by $\Delta^S$ the topological simplicial complex $\Delta^{|S|-1}$, so that $\Delta^{[n]} = \Delta^{n}$. We denote by $\mathscr{P}(S)$ the power set of the set $S$, which we will freely use to also mean the corresponding category with morphisms given by inclusion and objects the subsets of $S$. We can also use $\mathscr{P}(S)$  to mean its diagrammatic representation; the following is a diagrammatic representation of the category $\mathscr{P} ([1])$. 
\[
\xymatrix{
\emptyset \ar[r]\ar[d] & \{0\}\ar[d]\\
\{1\} \ar[r] & \{0, 1\}\\
}
\]

We will denote by $\mathscr{P}_0(S)$ the subcategory without the emptyset and $\mathscr{P}^1 (S)$ the subcategory of $\mathscr{P}(S)$ with $S$ removed.  %
An \textit{$(n+1)$-cube} of spaces is then a functor from $\mathscr{P}([n])$ to spaces, with sub-diagrams given by restricting to $\mathscr{P}_0([n])$ or $\mathscr{P}^1([n])$, the punctured or co-punctured $(n+1)$-cube, respectively.  %
For $\mathcal{X}$ a $\mathscr{P}^1([n])$-diagram, rather than index $\mathcal{X}$ by the subsets $S \in \mathscr{P}^1([n])$, it is customary to consider instead $\mathcal{X}([n]-U)$, where $U \in \mathscr{P}_0 ([n])$.

An $n$-cube $\mathcal{X}$ is \textit{cartesian} if its initial point, $\mathcal{X}(\emptyset)$,  is weakly equivalent along the natural map to the homotopy limit of the rest of the diagram, i.e. if $\mathcal{X}(\emptyset) \overset{\sim}{\ra} \holim_{U \in \mathscr{P}_0([n])} \mathcal{X}(U)$.  We say that $\mathcal{X}$ is \textit{cocartesian} if $\mathcal{X}([n])\overset{\sim}{\la}\hocolim_{U \in \mathscr{P}_0([n])} \mathcal{X}([n]-U)$ is a weak homotopy equivalence.  The terms \textit{strongly cocartesian} and \textit{strongly cartesian} are used to denote that every sub-2-face (i.e. every sub-square) is \textit{cocartesian} (or, respectively, \textit{cartesian}). We note that one may add ``homotopy" before (co)cartesian in the preceding if we were going to distinguish between homotopy (co)limits and (co)limits, but we omit that modifier in keeping with conventions used in work of Goodwillie\cite[Definition 1.3]{GC2}.

\subsection{Ho(co)lim for n-cubes}\label{sec:ho(co)lim}
For a punctured cube of spaces, $\X$, the model for homotopy limit we use $\Hom_{\Top^{\mathscr{P}_0([n])}}(\Delta^S|_{S\in \mathscr{P}_0([n])},\X)$. For $\X$ a punctured square, an element of this hom-space is a map from the left diagram to the right diagram:
\[
\xymatrix{
   				   & \Delta^0 \ar[d]^{d^1}\\
   \Delta^0 \ar[r]_{d^0} & \Delta^1\\
}
\hspace{ 1 in}
\xymatrix{
   & \X(0) \ar[d]^f\\
   \X (1) \ar[r]_g & \X (\{0,1\})\\
}
\]
 where $\Delta^i$ are topological simplices (the realizations of $\Delta^i$, by common abuse of notation) and the $d^i$ are the coface maps including the point at one end or the other of the interval, induced by the two inclusions of $\{0\}$ into $\{0,1\}$ as $0$ or $1$. An element of this hom-space is a tuple $(x_0, x_1, \gamma) \in \X(0) \x \X(1) \x \X(\{0,1\})^I$ such that the path $\gamma$ in $ \X (\{0,1\})$ has $\gamma (0) = f (x_0)$ and $\gamma (1) = g(x_1)$. 

This holim model has a natural left adjoint, which takes a space $X$ and sends it to the punctured cubical diagram $S \mapsto X \x \Delta^S$ (see Example 8.13 of \cite{duggerPrimer}).  This is half of the left adjunction in Theorem \ref{thm:main}.

For a small category $\dcal$ and the category of $\dcal$-diagrams in another category $\ccal$, the opposite category of $\dcal$ diagrams in $\ccal$ is the category of $\dcal^{op}$ diagrams in $\ccal$:   $(\ccal^\dcal)^{op} = \ccal^{\dcal^{op}}$. For example, $\dcal= \Delta$, then $\ccal^\dcal$ is cosimplicial objects in $\ccal$ and the opposite category is simplicial objects in $\ccal$. In our setting, the opposite category of punctured cubes of spaces, denoted $\Fun (\mathscr{P}_0 ([n]), \Top)$ is the category of copunctured cubes of spaces--cubical diagrams with the final object missing -- denoted  $\Fun (\mathscr{P}^1 ([n]), \Top)$.

For a given a model of a holim in a category $\ccal$, one model 
for the hocolim in $\ccal^{op}$ is simply the opposite of the holim from $\ccal$.  We describe the analogous dual model for a homotopy colimit, following Dugger\cite[Section 8.10]{duggerPrimer}, which is a kind of tensor product.  

Given two diagrams, $X, Y$ where $X: I \ra \Top$ and $Y: I^{op} \ra \Top$, the tensor product of diagrams $X \ox Y$ is defined as the coend 
\[
X \ox Y = \text{coeq} \left[\xymatrix{ \coprod_{i \ra j} X_i \x Y_j \ar@<-5pt>[r]\ar@<5pt>[r]& \coprod_i X_i \x Y_i }\right]
\]

For a category $C$, we use $BC^{op}$ to denote the geometric realization of the nerve of its opposite category. For each $c\in C$, there is an associated undercategory $(c \downarrow C)$ of objects in C with arrows from $c$; the assignment $c \mapsto (c\downarrow C)$ is functorial.  

Then let $B(- \downarrow I)^{op} : I^{op}\ra \Top$ be the functor sending $i$ to the  $i \ra B(i \downarrow I)^{op}$.  
Then,for $X$ an $I$-diagram, a model for hocolim$_I X$  is  $X \ox B(- \downarrow I)^{op}$,  as in Example 8.12 of \cite{duggerPrimer}. 

For any copunctured diagram $\X : \mathscr{P}^1 ([n]) \ra \Top$, we form its homotopy colimit by tensoring with the diagram $S \mapsto \Delta^S$; 
this is $B(- \downarrow I)^{op}$ in our setting. 

\subsection{Goodwillie Calculus}\label{sec:bkGC} 
Not much background in Goodwillie calculus is needed to understand our results. 
Information regarding the dual calculus may be found in section \ref{sec:bkDGC}.  In this paper, we restrict ourselves to calculus ``over a point". Goodwillie has defined the theory more arbitrarily, for spaces over an arbitrary fixed space, and recent work has extended another model of the calculus to functors of spaces with maps factoring a fixed map (e.g. for a map $f:A \ra B$, a factorization is then a space $X$ with maps $\alpha, \beta$ such that $\beta \circ \alpha =f$), see \cite{BJM14}.  It is possible that these more general forms of calculus then give altered versions of LScocategory. That is, we expect that $\T_n \I$ for functors over arbitrary $Y$ classifies a sort of relative or fiberwise LScocategory.   

We will assume our domain and codomain are topological spaces over a point, outside of the dual calculus setting, where to talk about $P^n$, we work stably, and use functors of (Bousfield-Friedlander) spectra.

\subsubsection{Definitions and constructions}
In \cite{GC1, GC2}, Goodwillie establishes the following definition, in analogy with a function being polynomial of degree 1 or $n$: 

\begin{defn}
A functor $F$ is \emph{excisive} (i.e. 1-excisive) if it takes cocartesian squares to cartesian squares and \emph{$n$-excisive} if it takes strongly cocartesian $(n+1)$-cubes to cartesian ones.
\end{defn}

Generalized, reduced homology theories, viewed as functors  $X \mapsto \Omega^\infty (\Sigma^\infty X \smsh E)$ for some spectrum $E$,  are nice excisive functors. In particular, a functor $F$ is excisive, reduced, and preserves filtered colimits if and only if it is a generalized, reduced homology theory in this sense.

We will now give the constructions necessary to produce the $n$-excisive approximations to a functor $F$, $\P_n F$, which are assembled from finite limit constructions, $\T_n F$. We let $\join$ denote the topological join over a point. 

We recall the following definition:
$\T_n F(X) := \holim_{U \in \mathscr{P}_0 ([n])} F(U \join X).$
%
We have a natural transformation $t_n: F(X) \ra \T_n F(X)$, given by the natural map

\[
F(X)=F(\emptyset \join X) \ra \underset{U \in \mathscr{P}_0 ([n])}{\holim} ( U \mapsto F(U \join X)).
\]
That is, the map from the initial object of the square, $F(X)$, to the homotopy pullback of the rest, $\T_n F(X)$. We can take $\T_n$ of $\T_n F$, and also have the same natural transformation from initial to homotopy pullback, now $\T_n F(X) \ra \T_n (\T_n F(X)) =: \T_n^2 F(X)$. For $n=1$, see Figure \ref{fig:T_1^2}.

The degree $n$ polynomial approximation to $F$, $\P_n F$, is constructed as the homotopy colimit along iterations of $t_n$, 
\[
\P_n F(X) := \hocolim ( \T_n F(X)\ra \T_n^2 F(X) \ra \cdots).
\]

It is not immediately obvious that $\P_n F$ is in fact n-excisive and universal (up to homotopy). We refer the reader to \cite{GC1, GC3} for the details, especially Lemma 1.9 of \cite{GC3}, with alternate proof provided by Charles Rezk \cite{rezk}.

\begin{figure}[h!]
\[
\scalebox{.75}{$
\begin{array}{rcl}
\T_1^2 F(X) &:= &   \holim 
\scalebox{.90}{$\left( 
	\begin{array}{c}
	\xymatrix{
	 	& \T_1 F(\{0\}\join X) \ar[d]\\
	 \T_1 F(\{1\} \join X) \ar[r] & \T_1 F(\{0,1\} \join X)\\
	}
	\end{array}
\right)$}
\\
\\
\\
&\simeq&  
\holim 
\left(
	\begin{array}{ccc}
	& &
		\scalebox{.75}{$\left( 
		\begin{array}{c}
		\xymatrix{
								&F(\{0\}\join \{0\} \join X)\ar[d]\\
		F(\{0\}\join \{1\} \join X) \ar[r] & F(\{0\}\join\{0,1\} \join X)\\
		}
		\end{array}
		\right)$}
		\\
		&&\downarrow\\
	\scalebox{.75}{$\left( 
		\begin{array}{c}
		\xymatrix{
								&F(\{1\}\join \{0\} \join X)\ar[d]\\
		F(\{1\}\join \{1\} \join X) \ar[r] & F(\{1\}\join\{0,1\} \join X)\\
		}
		\end{array}
		\right)$}
		&\rightarrow&
	\scalebox{.75}{$\left( 
		\begin{array}{c}
		\xymatrix{
								&F(\{0,1\}\join \{0\} \join X)\ar[d]\\
		F(\{0,1\}\join \{1\} \join X) \ar[r] & F(\{0,1\}\join\{0,1\} \join X)\\
		}
		\end{array}
		\right)$}
		\\		
	\end{array}
	\right)
\end{array}
$}
\]
\caption{$\T_1^2F(X)$}
\label{fig:T_1^2}
\end{figure}

%
The collection of polynomial approximations to a functor $F, \{ \P_n F\}_{n \geq 0}$, comes with natural fibrations $\P_nF(X) \ra \P_{n-1} F(X)$ for all $n \geq 1$.

With these maps we form a tower, the Goodwillie (Taylor) tower of $F(X)$:
\[
\cdots \ra \P_n F(X) \ra \P_{n-1} F(X) \ra \cdots  \longrightarrow \P_1 F(X) \ra \P_0 F(X).
\]
Since we are restricting ourselves in this work to calculus over a point, $P_0 F(X) = F(\ast)$; in general, $P_0 F(X) = F(Y)$ for whatever space $Y$ we ware working over.

We denote by $\P_\infty F(X)$ the homotopy inverse limit of this tower.

\subsubsection{Analyticity and convergence} 
Heuristically, we say that a functor $F$ is $\rho$-analytic if its failure to be $n$-excisive for all $n$ is bounded with a bound depending on $\rho$; $\rho$-analytic implies $(\rho+1)$ analytic, which is a weaker condition. This gives rise to a notion of ``radius of convergence of a functor''.

More precisely, $F$ is a $\rho$-analytic functor when there exists a $q$ such that for all $n$, $F$ takes a strongly co-Cartesian $(n+1)$ cube $X$ (with connectivities of the maps $X(\emptyset) \ra \X(s)$ $k_s > \rho$) to a cube which is $(n\rho-q + \Sigma k_s)$-cartesian. That is, the map $F(\X(\emptyset))$ to the homotopy limit of the rest of the cube is $(n\rho-q + \Sigma k_s)$-connected (see definition 4.1 and 4.2 of \cite{GC2}).  This is the bound on the failure of the target cube to be cartesian, i.e. the bound on the failure of $F$ to be $n$-excisive for all $n$. 

\begin{prop}\cite[Theorem 1.13]{GC3} \label{prop:analyt}
If $F$ is at least $\rho$-analytic and $X$ is $k$-connected for $k$ at least $\rho$ (i.e. if $X$ is in the ``radius of convergence'' of $F$) , then $F(X) \overset{\ra}{\simeq} \P_\infty (X)$.  
\end{prop}


Some of the earliest and most powerful results of Goodwillie calculus relate to analyticity and other properties which follow. Examples of $1$-analytic functors include $\I_{\Top}$,Waldhausen's algebraic $K$-theory functor, and $TC$, the topological cyclic homology of a space. For a $\rho$-connected CW complex $K$, the functor $X \mapsto \Omega^\infty \Sigma^\infty \Map (K, X)$ is $\rho$-analytic. For $Q:= \Omega^\infty \Sigma^\infty$, for each $i$, $Q^i$ is 0-analytic and $\Z_\infty X\simeq \holim_\Delta Q^i X$ is a 0-analytic functor, as holims of analytic functors are analytic.

\subsection{Monads M and left/right M-Functors}\label{sec:bkMod} 
Monads are also sometimes called ``triples", especially in the more algebraic literature, and in some of the Goodwillie calculus constructions such as those of \cite{JM-cotriples,BEJM14, BJM14}.

We first recall relevant definitions of a monad $M$ and $M$-Functor, which is the functor extension of the notion of a module over the monad $M$:

\begin{defn}\cite[p.133]{maclane}
A monad $M=f
 M,\eta,\mu\rangle$ in a category $C$ consists of a functor $M: C \ra C$ and  two natural transformations 
\[
\eta: \text{Id}_C \ra M \hspace{1 in} \mu : M^2 \ra M
\]
which make the following commute
\[
\xymatrix{
M^3 \ar[r]^{M \mu} \ar[d]^{\mu M} & M^2\ar[d]^{\mu}\\
M^2 \ar[r]^{\mu} & M
}
\hspace{.5 in}
\xymatrix{
\text{Id}_C\circ  M \ar@{=}[d] \ar[r]^{\eta M} & M^2 \ar[d]^{\mu}& \ar[l]_{M \eta} M \circ \text{Id}_C \ar@{=}[d]\\
M \ar@{=}[r]& M \ar@{=}[r]& M\\
}
\]

\end{defn}

\begin{defn}\cite[p.136]{maclane}
If $M=\langle M,\eta,\mu\rangle$ is a monad in a category $C$, we have notions of left and right $M$-module as follows.
A left $M$-module  (referred to in \cite[p.136]{maclane} as an $M$-algebra) $\langle x,h\rangle$ is a pair consisting of an object $x\in C$ and an arrow $h:Mx \ra x$ of $C$ which makes both of the following diagrams commute (associativity law, unit law):
\[
\xymatrix{
M^2 x \ar[r]^{Mh} \ar[d]_{\mu_x} & Mx\ar[d]_h & x \ar[r]^{\eta_x}\ar[dr]^{1}& Mx\ar[d]^h\\
Mx \ar[r]^h &x &&x\\
}
\]
A right $M$-module $\langle x',h'\rangle$ is a pair consisting of an object $x'\in C$  and an arrow $h':xM \ra x$ of $C$ (here $xM$ means $x$ with a right $M$-action) which makes both of the following diagrams commute (co-associativity law, co-unit law):
\[
\xymatrix{
x' M^2  \ar[r]^{h'M} \ar[d]_{\mu_{x'}^\join} & x' M\ar[d]_{h'} & x' \ar[r]^{\eta_{x'}^\join}\ar[dr]^{1}& x'M\ar[d]^{h'}\\
x' M \ar[r]^{h'} &x' &&x'\\
}
\]
\end{defn}

The following is a slight modification of Definition 9.4 from \cite{May}. There it is called an $M$-functor. We re-name it so as to be able to talk about functors with both right and left $M$ actions. This is the functor-level analog of being an $M$-module. 

\begin{defn}
Let $(M, \mu, \eta)$ be a monad in $C$. A right $M$-functor $(G, \lambda)$ in a category $D$ is a functor $G: C \ra D$ together with a natural transformation of functors $\lambda :GM \ra G$ such that the following diagrams are commutative
\[
\xymatrix{
G M^2 \ar[r]^{G \mu} \ar[d]_{\lambda} & GM \ar[d]^{\lambda} \\
GM \ar[r] ^{\lambda} & G .\\
}
\hspace{.25 in}
\text{ and }
\hspace{.25 in}
\xymatrix{
G \ar[r]^{G \eta}\ar@{=}[dr] & GM\ar[d]^{\lambda}\\
 & G\\
}
\]
A left $M$-functor $(G, \lambda')$ in a category $D$ is a functor $G: C \ra D$ together with a natural transformation of functors $\lambda' :MG \ra G$ such that the following diagrams are commutative
\[
\xymatrix{
M^2 G\ar[r]^{\mu G} \ar[d]_{\lambda'} & M G \ar[d]^{\lambda'} \\
M G \ar[r] ^{\lambda'} & G .\\
}
\hspace{.25 in}
\text{ and }
\hspace{.25 in}
\xymatrix{
G \ar[r]^{\eta G}\ar@{=}[dr] & MG\ar[d]^{\lambda'}\\
 & G\\
}
\]
\end{defn}

Note that for an adjoint pair of functors $L:A \ra B$, $R:B \ra A$, with unit and counit $\eta: Id_A \ra RL, \epsilon: LR \ra Id_B$ we have a natural monad $M:= RL$ with multiplication $\mu:= R \epsilon L: RLRL \ra RL$ and unit given by the unit of the adjunction.

\begin{prop}\label{prop:RFL}
For an adjoint pair of functors $L:A \ra B$, $R:B \ra A$ and an endofunctor $F: B \ra B$, we have that $RFL$ has the structure of a left and right $RL$-functor, and dually $LFR$ left and right $LR$-functor. 
\end{prop}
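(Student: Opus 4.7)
The plan is to exhibit explicit structure maps built from the unit $\eta$ and counit $\epsilon$ of the adjunction, and then check the associativity and unit diagrams by unwinding the natural transformations pointwise and invoking naturality plus the triangle identities.

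For the first claim, put $G = RFL$ and $M = RL$ with $\mu = R\epsilon L$ and unit $\eta$. I define the right action by $\lambda := RF\epsilon L : RFLRL \to RFL$ (collapse the inner $LR$) and the left action by $\lambda' := R\epsilon FL : RLRFL \to RFL$ (collapse the outer $LR$). Both are natural transformations since they are whiskered versions of the natural transformation $\epsilon$.

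To verify associativity for the right structure, I would compute both composites $\lambda \circ G\mu$ and $\lambda \circ \lambda M$ at an object $X \in A$. Each equals $RF$ applied to a composite $LRLRL(X) \to L(X)$: one route gives $\epsilon_{L(X)} \circ LR\epsilon_{L(X)}$, the other $\epsilon_{L(X)} \circ \epsilon_{LRL(X)}$. These agree by naturality of $\epsilon : LR \to \mathrm{Id}_B$ applied to the morphism $\epsilon_{L(X)} : LRL(X) \to L(X)$. For the unit axiom, $\lambda \circ G\eta$ at $X$ equals $RF(\epsilon_{L(X)} \circ L\eta_X)$, which is $\mathrm{id}_{RFL(X)}$ by the triangle identity $\epsilon L \circ L\eta = \mathrm{id}_L$. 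The verification for the left structure $\lambda'$ is formally dual, using naturality of $\epsilon$ at $\epsilon_{FL(X)}$ for associativity, and the other triangle identity $R\epsilon \circ \eta R = \mathrm{id}_R$ for the unit.

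For the ``dually $LFR$'' claim, I interpret $LR$ with its comonad structure on $B$ (comultiplication $L\eta R$ and counit $\epsilon$), and put $H = LFR$. The right coaction $H \to H(LR)$ is $LF\eta R : LFR \to LFRLR$, and the left coaction $H \to (LR)H$ is $L\eta FR : LFR \to LRLFR$. The coassociativity and counit diagrams are obtained from the previous ones by reversing arrows, and their verification is exactly the previous argument with the roles of $\eta$ and $\epsilon$ interchanged: naturality of $\eta$ handles coassociativity, and the two triangle identities handle the counit axioms. There is no substantive obstacle; the content of the proof is entirely formal and the only thing to be careful with is bookkeeping the various whiskerings, making sure that for each pair of parallel composites one identifies precisely the morphism to which naturality of $\epsilon$ (resp.\ $\eta$) must be applied.
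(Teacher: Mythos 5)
Your proposal is correct and follows the same route as the paper: the structure maps are exactly the whiskerings $RF\epsilon L$ (inner/right action) and $R\epsilon FL$ (outer/left action), with the dual coactions $LF\eta R$ and $L\eta FR$ for $LFR$. The paper merely displays the associativity and unit diagrams without chasing them, so your explicit verification via naturality of $\epsilon$ at $\epsilon_{L(X)}$ (resp.\ $\epsilon_{FL(X)}$) and the two triangle identities supplies precisely the omitted details.
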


We will illustrate this for the bimodule over the monad case and the dual proof follows by dualizing our arguments and flipping the arrows in our diagrams. The structure map $\lambda := R \epsilon FL$ and costructure map $\lambda':= R F \epsilon L$. 

\if false
Left $RL$-functor structure:
\[
\xymatrix{
RL\circ RL\circ RFL \ar[rr]^{RLR \epsilon FL} \ar[d]_{R\epsilon L RFL} & &RL\circ RFL\ar[d]_{R \epsilon FL} & RFL \ar[rr]^{\eta RFL}\ar@{=>}[drr]& & RL\circ RFL\ar[d]^{R \epsilon FL}\\
RL\circ RFL\ar[rr]^{R \epsilon FL} && RFL &&& RFL\\
}
\]

Right $RL$-functor structure:
\[
\xymatrix{
RFL \circ  RL\circ RL  \ar[rr]^{R F \epsilon L RL} \ar[d]_{ RFL R\epsilon L} & &RFL \circ  RL\ar[d]_{R F \epsilon L} & RFL \ar[rr]^{RFL \eta}\ar@{=>}[drr]&& RFL \circ  RL\ar[d]^{R F \epsilon L}\\
RFL \circ  RL \ar[rr]^{R F \epsilon L} &&RFL &&&RFL\\
}
\]

{\color{blue}
\fi 

Right $RL$-functor structure:
\[
\xymatrix{
RFL \circ  RL\circ RL  \ar[rr]^{\lambda } \ar[d]_{ (RFL) \mu } & &RFL \circ  RL\ar[d]_{\lambda } & RFL \ar[rr]^{(RFL) \eta}\ar@{=>}[drr]&& RFL \circ  RL\ar[d]^{\lambda}\\
RFL \circ  RL \ar[rr]^{\lambda } &&RFL &&&RFL\\
}
\]

Left $RL$-functor structure:
\[
\xymatrix{
RL\circ RL\circ RFL \ar[rr]^{\lambda'} \ar[d]_{\mu (RFL)} & &RL\circ RFL\ar[d]_{\lambda'} & RFL \ar[rr]^{\eta (RFL)}\ar@{=>}[drr]& & RL\circ RFL\ar[d]^{\lambda'}\\
RL\circ RFL\ar[rr]^{\lambda'} && RFL &&& RFL\\
}
\]


\subsection{Quillen Functors} \label{sec:QFunc}
We review the definitions of Quillen functor and adjunction and include a list of useful properties. 

\begin{defn}\cite[14.1.]{Dwyerhirschetc}\label{defn:Qadj}
Given two model categories M and N, a Quillen adjunction is an adjunction
\[
\xymatrix{f : M \ar@<2 pt>[r] & N : f'\ar@<2 pt>[l]}
\]
of which
\begin{itemize}
\item[(i)]  the left adjoint, $f$, is a left Quillen functor; a functor which preserves cofibrations and trivial cofibrations, and
\item[(ii)] the right adjoint, $f'$, is a right Quillen functor; a functor which preserves fibrations and trivial fibrations.
\end{itemize}
\end{defn}

\noindent We also provide a list of properties which these functors satisfy.
\begin{prop}\cite[14.2.(ii)-(iii)]{Dwyerhirschetc} \label{prop:opQ}
Quillen functors satisfy the following properties:
\begin{itemize}
\item[(i)] Every right adjoint of a left Quillen functor is a right Quillen functor and every left adjoint of a right Quillen functor is a left Quillen functor. 
\item[(ii)] The opposite of a left Quillen functor is a right Quillen functor and the
opposite of a right Quillen functor is a left Quillen functor. 
\end{itemize}
\end{prop}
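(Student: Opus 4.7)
The plan is to reduce both parts to the lifting-property characterization of (trivial) fibrations and cofibrations, together with the general fact that an adjunction interchanges lifting problems on the two sides.

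For part (i), suppose $L : M \to N$ is a left Quillen functor with right adjoint $R : N \to M$. I want to show $R$ preserves fibrations and trivial fibrations. Recall that in a model category a map is a fibration iff it has the right lifting property with respect to every trivial cofibration, and is a trivial fibration iff it has the right lifting property with respect to every cofibration. So, given a fibration $f : X \to Y$ in $N$ and a trivial cofibration $i : A \to B$ in $M$, a lifting problem
\[
\xymatrix{ A \ar[r] \ar[d]_{i} & R(X) \ar[d]^{R(f)} \\ B \ar[r] & R(Y) }
\]
in $M$ transposes under the adjunction to a lifting problem of $L(i)$ against $f$ in $N$. Since $L$ is left Quillen, $L(i)$ is a trivial cofibration, so the transposed lift exists; transposing back produces the required lift for $R(f)$. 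The same argument with $i$ merely a cofibration (and $f$ a trivial fibration) shows $R$ preserves trivial fibrations. The dual argument, using that a map is a (trivial) cofibration iff it has LLP against all (trivial) fibrations, shows the left adjoint of a right Quillen functor is a left Quillen functor.

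For part (ii), the key input is that if $M$ is a model category then so is $M^{op}$, with the weak equivalences unchanged but with cofibrations and fibrations (and their trivial versions) interchanged. A left Quillen functor $L : M \to N$ therefore yields $L^{op} : M^{op} \to N^{op}$, and the conditions that $L$ preserves cofibrations and trivial cofibrations translate exactly to the conditions that $L^{op}$ preserves fibrations and trivial fibrations in the opposite model structures. Hence $L^{op}$ is right Quillen. The reverse direction is symmetric.

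The only genuinely nontrivial step in this plan is verifying the characterization of fibrations by the right lifting property, which is built into the axioms of a model category (together with the small object argument or equivalent factorization axiom); once that is in hand the proofs are purely formal manipulations of the adjunction $\Hom_N(L(-),-) \cong \Hom_M(-,R(-))$. I expect no real obstacle beyond carefully recording the adjunction transpositions.
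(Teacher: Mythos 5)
Your argument is correct, and there is nothing in the paper to compare it against: the paper states this proposition as a citation of \cite[14.2]{Dwyerhirschetc} and gives no proof. Both halves of your proposal are the standard arguments — (i) the transposition of lifting problems across the adjunction, combined with the characterization of (trivial) fibrations by the right lifting property against (trivial) cofibrations, and (ii) the passage to the opposite model structure, where cofibrations and fibrations trade places while weak equivalences are preserved. The only point worth recording explicitly in (ii) is that $L^{op}$ is indeed a right adjoint (namely of $R^{op}$), since under the definition in use a right Quillen functor is required to be the right adjoint of a Quillen adjunction; this is immediate from $L\dashv R$, and with that remark your proof is complete.
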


\noindent We are considering adjoint pairs $(L_n, R_n)$ and $(L^n, R^n)$ where $L^n := R_n^{op}$. Thus, using this proposition, we need to only show one of the four functors is Quillen.

%
\section{Proofs of main results} \label{sec:proofs}
%
%

\noindent We will prove in this section the following theorem and its consequences: 

\begin{thm}\label{thm:main}
Let $F$ be a reduced homotopy endofunctor of topological spaces. For all $n\geq1$, there are adjoint functors  $R_n, L_n$  such that $\T_n F$ is weakly equivalent to $R_n F L_n$. For $F$ strongly reduced, we have that $\T_n F=R_n F L_n$. In particular, $\T_n \I = R_nL_n$, i.e. has the structure of a monad for all $n$. 
\end{thm}
%

The relevant diagram of adjunctions, with left adjoints on the top, is as follows:
\[
L_n:
\xymatrix{
\Top \ar@<+5 pt>[rr]^-{S\mapsto -\x \Delta^S }&& \ar[ll]^-{\holim} \Fun (\mathscr{P}_0([n]), \Top) \ar@<+5 pt>[r]^{\text{red}} & \ar[l]^{\text{inc}}\widetilde{\Fun} (\mathscr{P}_0([n]), \Top). \\
}
:R_n
\]



Key for this proof is to determine the correct categories to be working between.  



We will first provide the proof in the case $n=1$ to motivate the general proof. 
Recall the definition of $\T_n F(X):= \underset{U \in \mathscr{P}_0([n])}{\holim} F(U \join X)$. For $n=1$, this yields the following homotopy pullback square: 

\[
\xymatrix{
\T_1 F(X) \ar[r]\ar[d]& F(CX)\ar[d]\\
F(CX)\ar[r] &F(\Sigma X).\\
}
\]

For reduced functors from based spaces to based spaces, $\T_1 F(X) \simeq \Omega F (\Sigma X)$, and $\Sigma, \Omega$ are adjoints between those categories. If we relax to reduced functors of unbased spaces, we have to be slightly more careful. 

There is a (clear) equivalence of categories between spaces and diagrams of the form 
\[
\ast \la X \ra \ast
\]
where $\ast$ is a point and $X$ is a space. However, the category of dual diagrams,
\[
\ast \ra Y \la \ast
\]
for $Y$ a space, is equivalent to the category of \textit{spaces with two basepoints}, which we denote by $\Top_{\ast_1 \; \ast_2}$.  We can see that we have an adjunction 
\[
\xymatrix{
\text{Unreduced Suspension}: \Top \ar@<5 pt>[r] & \ar[l] \Top_{\ast_1 \; \ast_2}: \text{Paths (between $ \ast_1$ and  $\ast_2$) }
}
\]
such that for $F$ reduced, $\T_1 F(X)$ is equivalent to $X \mapsto SX$ followed by $F$ (which remains in $\Top_{\ast_1 \; \ast_2}$ because $F$ is reduced) and then by taking paths. 

The general case will not involve spaces with a multitude of basepoints, but cubical diagrams $\X$ which are similarly ``reduced'', i.e.  $\X (S)$ is a point whenever $|S|=1$.

\subsection{Proof of the general case, arbitrary n} \label{sec:genproof}

We will be working with the categories of spaces, Top, of punctured diagrams of spaces,  Fun$(\mathscr{P}_0 ([n]), \Top) = \Top^{\mathscr{P}_0 ([n])}$ and of reduced punctured cubical diagrams of spaces, $\widetilde{\mathrm{Fun}}(\mathscr{P}_0([n]), \Top)$. Each diagram $\X \in \widetilde{\mathrm{Fun}}(\mathscr{P}_0([n]), \Top)$ has the property that $\X(S)$ is a point, for $|S|=1$. 

We now establish the adjunctions given below the statement of Theorem \ref{thm:main}, where $L_n := \text{red} \circ (S\mapsto - \x \Delta^S )$ and $R_n := \holim \circ \text{ inc}$.

%
\subsection{Holim and \texorpdfstring{$-\x \Delta^S$}{its adjoint}} 
%

The model which we use for the homotopy limit of a punctured cube $\X$ is 
 $\hom_{\Top^{\mathscr{P}_0([n])}}(\Delta^{-},\X (-))$. It has a natural left adjoint, which takes a space $X$ and sends it to the punctured cubical diagram $S \mapsto X \x \Delta^S$, $S\in \mathscr{P}_0([n])$:  
\[  
\hom_{\Top}(Y, \hom_{\Top^{\mathscr{P}_0([n])}}(\Delta^{-},\X (-))) 
\cong 
\hom_{\Top^{\mathscr{P}_0([n])}}(Y\x \Delta^{-}, \X (-))) .
\]
Note that  $Y\x(\Delta^S|_{S\in \mathscr{P}_0([n])})$ is precisely $S \mapsto Y \x \Delta^S$, $S\in \mathscr{P}_0([n])$. This adjunction is established in \cite[CH XI, \S 3]{BK}. This arises from the pointwise adjunctions,  which for each $S\in \mathscr{P}_0([n])$ are of the form 
\[  
\hom_{\Top}(Y, \hom_{\Top}\Delta^S,\X(S)) 
\cong 
\hom_{\Top}(Y\x \Delta^S, \X (S)) .
\]

%
\subsection{Reduction and inclusion} 
%

There is also a natural left adjoint to the inclusion of reduced punctured cubical diagrams into punctured cubical diagrams. This takes a diagram $\ycal$ to a diagram $\red (\ycal)$ such that 
\[
\red (\ycal)(S) := \mathrm{colim}(\ycal(S) \la \coprod \limits_{j \in S} \ycal(\{j\}) \ra S).
\]
It is not alarming that this is not a priori a \textit{homotopy} colimit, because we want a diagram that is honestly reduced, with $\red (\ycal)(S)$ to be a point, not just contractible, for $|S|=1$.

To establish this adjunction, it suffices to show that any map from $\ycal \in  \Top^{\mathscr{P}_0 ([n])}$ to $\X \in \widetilde{\mathrm{Fun}}(\mathscr{P}_0([n]), \Top)$ must factor through red$(\ycal):= S \mapsto \red (\ycal)(S)$.

Consider $n=2$, a map of punctured squares. 
\[
\scalebox{.8}{$
\xymatrix{
             & \ycal(0)\ar[dd] \ar[dr]\\ 
             &                    & \X (0)=\ast \ar[dd]\\ 
\ycal(1) \ar[r] \ar[dr] & \ycal(\{0,1\}) \ar[dr]\\
                    & \ast = \X (1) \ar[r] & \X (\{0,1\}) 
}$}
\]
This map of diagrams is the same as having a square of the following form: 
\[
\xymatrix{
\ycal(0) \smcoprod \ycal(1) \ar[d] \ar[r] & \ast \smcoprod \ast \ar[d]\\
\ycal(\{0,1\}) \ar[r] & X(\{ 0,1\}) \\
}
\]
The maps to the final space must factor through the colim of the rest, which is exactly $\red (\ycal)(\{0,1\})$. 

\[
\xymatrix{
\ycal(0) \smcoprod \ycal(1) \ar[d] \ar[r] & \ast \smcoprod \ast \ar[d]\ar@/^/[ddr]\\
\ycal(\{0,1\}) \ar@/_/[drr]\ar[r] & \red (\ycal)(\{ 0,1\})\ar[dr] \\
                  &                              & \X (\{0,1\})\\
}
\]
What remains is to explain how this extends to higher dimensional cubical diagrams. \\

There is a natural map from $\ycal$ to red$(\ycal)$: 
Consider the collection of maps for each $j \in S$, $\ycal(j) \ra \ycal(S)$ instead as one map from the coproduct $\coprod_{j\in S}\ycal(j) \ra \ycal(S)$  and it is clear that $\ycal(S)$ maps to the cofiber of this map. 

Given that, and given a map of cubes $f:\ycal \ra \X$ for $\X$ reduced, we have 
\[
\xymatrix{
\ycal \ar[r]^{\text{f}} \ar[d]_{\red } & \X \ar[d]^{\red }_{\rcong}\\
\text{red}(\ycal) \ar[r]^{\red (\text{f})} & \X\\
}
\]
That is, $f$ must factor through the reduced cube. $\qed$

%
\subsection{Composed adjunctions and the topological join} 
%

Then $L_n = \text{red} \circ (S\mapsto - \x \Delta^S )$, the composition of the two left adjoints, sends a space $X$ to the diagram
\[
S \mapsto \colim (X \x \Delta^S \la X \x S \ra S)
\]

Note that a model for the join of two spaces, $X$ and $Y$, is the following, where C is the cone: 
\[
 \colim (X \x CY \la X \x Y \ra Y)
\]
Since the map  $X\x CY \la X\x Y$ is a cofibration, this colim is also a model for the homotopy colim of the same diagram with $X$ instead of $X \x CY$. 

For a set $S$ considered as a discrete space, the natural inclusion $CS  \ra \Delta^S$ is a cofibration and a homotopy equivalence and makes the next diagram commute. 
That is, we have the following map of diagrams with all vertical arrows homotopy equivalences. 
Moreover, the top left horizontal map is a cofibration and this is enough to ensure the induced map of colims of the rows is an equivalence. %
This suffices because, thanks to \cite[Appendix]{dugger-appendix}, we do not need cofibrancy on the objects as well.  

\[
\xymatrix{
X \x CS \ar@{{)}->}[d]  & X \x S\ar[d]^{id_{X\x S}} \ar[r]\ar[l]& S\ar[d]^{id_S}\\
X \x \Delta^S                  & X \x S                  \ar[r]\ar[l]& S\\
}
\]

\noindent As a result, $L_n$ is not just a colim but a hocolim, and its pushout it is homotopic to $X \join S$, so we may take it as a model for the join.  That is, $L_n (X) = S \mapsto X \join S$. 

%
\subsection{Relating this adjunction to the Goodwillie Calculus} 
%

Recall that $\T_n F(X)$ is formed by first applying $F$ to the diagram $S \mapsto X \join S$, for $S\in \mathscr{P}_0 ([n])$ and taking the homotopy limit. That is, it may be written as $\holim F \circ L_n$.

For a very general homotopy functor $F$, it will not be true that $F$ of a reduced punctured cube will again be a reduced punctured cube. The condition necessary is that $F$ is \textit{strongly reduced}. 

As we will want to compare our results with those of Biedermann and Dwyer, it is important to note that they restrict to functors which are of this type, specifically, with spaces replaced by based, connected simplicial sets. 

Given such an $F$, and that the holim of punctured cubes is the same as $\holim\circ$ inc $=R_n$, we see that $\T_n F = R_n F L_n$.

That is, we have established an adjunction between Top and $\widetilde{\Fun}(\mathscr{P}_0([n]), \Top)$ for each $n$ such that $\T_n F(X) = R_n F ( L_n X)$. %
\subsubsection{Functors which are reduced, not strongly reduced} 
In terms of other applications the author has in mind, it would be best if we could be less restrictive with our functors. 

For $F$ reduced, consider now $\T_n \tilde{F} := R_n \circ \red \circ F\circ L_n$.  \\

If $F$ takes values in path-connected spaces, then we may contract $F(S)$, $|S|=1$ to a point for all $S$ and not disturb the homotopy type of the homotopy limit. That is, for $F$ reduced and taking values in connected spaces, $\T_n F \overset{\simeq}{\ra} \T_n \tilde{F}$ is a 
weak homotopy equivalence.

\[
\xymatrix{
F(CX) \ar@{->>}[d]^{\simeq} \ar[r]  & F(\Sigma X) \ar@{=}[d] &\ar[l] F(CX)\ar@{->>}[d]^{\simeq}\\
\ast \ar[r]  & F(\Sigma X) &\ar[l] \ast \\
}
\]
It is important to note that if $F$ is strongly reduced,  then $\T_n F \overset{\cong}{\ra} \T_n \tilde{F}$ is a point-wise homeomorphism.

\subsection{Quillen adjunction}
By Definition \ref{defn:Qadj}, to establish that our adjunctions are Quillen pairs, since we already have that they are adjunctions, we just need to check that either $L_n$ preserves cofibrations and trivial cofibrations or $R_n$ preserves fibrations and trivial fibrations. 

Top has the usual model category structure with fibrations the Serre fibrations;  cofibrations the retracts of relative cell complexes; and weak equivalences the weak homotopy equivalences. Both diagram categories will be taken with the levelwise model structure induced by this model structure in Top. We have that
\[
\begin{array}{lcr}
X \mapsto (S \mapsto X \join S) &\hspace{1cm} &S \in \mathscr{P}([n])
\end{array}
\]
preserves cofibrations and trivial cofibrations, as follows. Given a (trivial) cofibration $X\cofra Y$, consider the cube

\[
\xymatrix{
X \x S \ar[rr] \ar[dd]\ar[dr] & &S\ar[dd] \ar[dr]\\
& X \x \Delta^S \ar[rr] \ar[dd]& & X \join S\ar[dd]\\
Y \x S \ar[rr] \ar[dr] & &S\ar[dr]\\
& Y \x \Delta^S \ar[rr] & & Y \join S\\
}
\]

\if false 
\[
\xymatrix{
X \x \Delta^S \ar[rr] \ar[dd]\ar[dr] & &S\ar[dd] \ar[dr]\\
& X \ar[rr] \ar[dd]& & X \join S\ar[dd]\\
Y \x \Delta^S \ar[rr] \ar[dr] & &S\ar[dr]\\
& Y \ar[rr] & & Y \join S\\
}
\]
\fi

\noindent This cube is cocartesian because the top and bottom squares (the ones with pushout $X\join S$ and $Y \join S$, respectively) are cocartesian. Moreover, and the three vertical maps $X\ra Y$, $X \x \Delta^s \ra Y\x \Delta^s$, and $S \ra S$ are all (trivial) cofibrations. 

As cofibrations in Top are stable under cobase change, the map $X \join S \ra Y \join S$ is also a cofibration.

In the case of considering a trivial cofibration, homotopy invariance of homotopy colimits (i.e. topological join) yields that the map $X \join S \ra Y \join S$ is a weak equivalence, i.e. it is also a trivial cofibration. This will hold for all $S$ and (trivial) cofibrations are defined levelwise, so we have shown that this is not just a left adjoint but also a left Quillen functor.


\begin{rem}
We would like to point out that if we start with fibrations of cosimplicial spaces and consider the induced cubical diagrams, these will still be fibrations in the levelwise structure as Reedy fibrations are also levelwise fibrations.  These diagrams are obtained by precomposing with the functor $\circ c_n: \mathscr{P}_0([n])\ra \Delta_{\leq n}$ which sends $S$ to $[\# S-1]$ and inclusions to the induced coface maps. So, fibrations of cosimplicial spaces are sent to fibrations in Top when following $\circ c_n$ by this Quillen adjunction.  
\end{rem}

Using Prop \ref{prop:opQ}(iii), we can conclude that the duals will also be Quillen adjoints since our model for hocolim is holim$^{op}$.

%
\section{LS cocategory and related corollaries} \label{sec:lscocat}
%

The purpose of this section is expand on the relationship between LScocategory and the constructions of Goodwillie calculus, including proofs and more details around the corollaries of Theorem \ref{thm:main} and also of Proposition \ref{prop:radconv}.


In the introduction of \cite{me-AGT} , it is explained exactly how to translate Hopkins' definition of symmetric LScocategory\cite[Section 3, p221-222]{hopkins} and how to translate from his language to ours.  The important part to note is that he defines, for a given space $X$, a functor $F^n$, as the homotopy inverse limit of a (punctured) cube; this is exactly $\T_n \mathbb{I} (X)$. 

%
%
%
%

Consequently, 

\begin{repprop}{def:LScocatTn}
A space $X$ has symmetric-LScocategory less than or equal to $n$ if and only if $X$ is a homotopy retract of $\T_n \I (X)$.
\end{repprop}

%

%
\subsection{Results and further proofs} \label{sec:lscocatproofs}
%
This section contains the statements and proofs of the corollaries of Theorem \ref{thm:main} and Corollary \ref{cor:TnI} which relate to the relationship with LS cocat. \\



The following corollary follows immediately from combining Proposition \ref{prop:RFL} with Theorem \ref{thm:main}.

\begin{cor} \label{cor:TnI}
The functors $\T_nF$ are left and right $\T_n \I$-functors, as are the $\P_nF$.   
\end{cor}
%
%


Corollary \ref{cor:TnI} establishes structure that provides maps which express $\T_n F(X)$ as a retract of $\T_n\I(\T_n F(X))$, which is exactly what one needs to say  that a space has LS cocat $\leq n$, by Proposition \ref{def:LScocatTn}. So, combining with Theorem \ref{thm:main} we also have

\begin{cor}\label{cor:sym}
For each $n \geq 1$, the functor $\T_nF$ takes values in spaces of symmetric LS cocat $\leq n$, as do the $\P_nF$.   
\end{cor}

\begin{proof}
Once we have the result for each $\T_nF$ and $\T_n^k F$, these homotopy retract maps clearly induce the same structure on the homotopy colimit of the $T_n^kF$'s, $\P_n F$. 

This follows immediately from the retract structure established in Corollary \ref{cor:TnI}. There is always a map, for $X$ a space, $X=\I(X) \ra \T_n \I (X)$.   So also for a space $\T_n F(X)$, we have a map $\T_n F(X) \ra \T_n \I \circ \T_n F(X)$. Writing in the adjunctions, we have the following: 
\[
\begin{array}{ccc}
\T_n FX  &\lra & \T_n \I \circ \T_n F(X)\\
\rcong & & \rcong\\
R_n F L_n & \lra   & R_n L_n R_n F L_nX\\
\end{array}
\]

The counit of the adjunction, $\epsilon: L_n R_n \I \ra \I$, provides our map 
\[
R_n L_n R_n F L_nX \ra R_n F L_n
\]
Recall that the counit and the unit of the adjunction give that the following composition
\[
\xymatrix{
R_n \ar[r]^{\eta R_n}& R_n L_n R_n \ar[r]^{R_n \epsilon} & R_n\\
}
\]
is the identity. 

We can apply this to $FL_n X$ to realize it as our maps 
\[
R_n F L_nX \ra R_n L_n R_n F L_nX \ra R_n F L_n
\]
\end{proof}

Theorem 2.1 of \cite{chorny-scherer} states that the Whitehead products of length $\geq n+1$ vanish in $\P_n F(X)$ for any space $X$. Corollary \ref{cor:WL} recovers and extend this result to the $\T_n$ a well: \\

\begin{cor}\label{cor:WL}
For every space $X$, the spaces $\T_nF(X)$ and $\P_n F(X)$ have Whitehead length $n$.
\end{cor}

\begin{proof}	
Combining Corollary \ref{cor:sym} with the following chain of inequalities due to \cite{BersteinGanea, hopkinsthesis, GaneaLS}, we conclude our result: 

\begin{equation}\label{ineq-nil}
\text{Whitehead length}(X) \leq  \text{nil}(\Omega X) \leq \text{ind LScocat}(X) \leq \text{sym LScocat}(X).
\end{equation}
\end{proof}
 
\begin{prop}\label{prop:radconv}
Let F be a $\rho$-analytic homotopy endofunctor of spaces for some $\rho\geq 0$. Assume that there is also an $n > 0$ such that $\T_nF$ is $n$-excisive. Then $F(X)$ is weakly equivalent to $\P_n F(X)$ for all $X$ of connectivity $\geq \rho$. 
 \end{prop}
 

\begin{proof}
Proposition \ref{prop:analyt} stated that if F is $\rho$-analytic, then for $X$ at least $\rho$-connected, $\P_\infty F(X)\simeq F(X)$.

Corollary 1.4  of \cite{me-AGT} states that for $F$ a $\rho$-analytic homotopy endofunctor of spaces and some space $X$, 
\[
\P_\infty F(X)\overset{\simeq}{\ra} \holim (\cdots \ra \T_n^k F(X) \ra \cdots \ra \T_2^kF(X) \ra \T_1^kF(X) )
\]
for all $k\geq \text{max}(\rho-1-\text{conn}(X)-1,0)$. 

We have assumed that $\T_n F$ is n-excisive, which implies that $\T_n F$ is equivalent to $\P_nF$ 
\footnote{This is established by Goodwillie on the bottom of p.661 of \cite{GC3}.}. %
$\T_n F$ $n$-excisive means that  $\T_n (\T_n F) \simeq \T_n F$, in particular,  $\T_n^k F$ is also equivalent to $\T_n F$ and $n$-excisive. A similar argument shows that excisiveness holds for the higher $\T_{n+1}F$ and $\T_{n+1}^kF$. %
So the holim of each row will be $\P_n F$; in particular, we know the holim of the rows at level $k\geq \rho$ (i.e. greater than or equal to the analyticity of $F$) will have this property, therefore $\P_\infty F \simeq \P_n F$.

And on its radius of convergence, $F$ is equivalent to $\P_\infty F$, which we just said was equivalent to $\P_n F$. So, on its radius of convergence--for $X$ with connectivity $\geq \rho$ -- $F$ is equivalent to $\P_nF$.
\end{proof}

%
\section{Dual adjunction} \label{sec:dualproofs}
%
%

Historically, dual calculus has only been defined rigorously in the cotriple calculus, see \cite{Dual}. To provide now a dual calculus for excisive functors, we choose here 
to define the constructions using our adjunctions and then prove that they  give rise to the (homtopy-universal) $n$-co-excisive approximations to a functor. 

We recall that the model for hocolim of a copunctured diagram we are using is equivalent to 
$\holim^{op}$ for our holim model. 
Let $R^n:=R_n^{op}$, hocolim precomposed with an inclusion. We will describe its adjoint, which we call $L^n$. 

We will need the category  $\utilde{\mathrm{Fun}}(\mathscr{P}^1([n]), \Top)$, the subcategory of '``co-reduced'' co-punctured cubical diagrams of spaces. That is, each diagram $\X \in \utilde{\mathrm{Fun}}(\mathscr{P}^1([n]), \Top)$ has the property that $\X([n]-S)$ is contractible, for $|S|=1$.  Dual to \textit{red}, which appears in the adjunction in Theorem \ref{thm:main}, there is a co-reduction functor that takes a copunctured cube to a coreduced one. Then 

\begin{prop} \label{prop:dualadj}
For each $n \geq 1$, there are adjunctions between these categories as follows:
\[
\begin{array}{c}
L^n:
\xymatrix{
\utilde{\Fun}(\mathscr{P}^1 ([n]), \Top) \ar@<+3 pt>[r]^-{ \text{inc}}& \ar@<+3pt>[l]^{ \text{cor}}  \Fun (\mathscr{P}^1([n]), \Top) \ar@<+3 pt>[rr]^-{\hocolim} & &\ar@<+3 pt>[ll]^-{X \mapsto ([n]-S \mapsto X^{\Delta^S})} \Top \\
}:R^n\\
\\
L^n:= \hocolim \circ \text{inc} \;\;\;\;\;\; R^n:= \text{cor} \circ (X \mapsto ([n]-S \mapsto X^{\Delta^S}))\\
\end{array}
\]
\end{prop}
%

\begin{defn} \label{defn:dualadj}
Given our adjoint pair $R^n, L^n$, for all $n\geq1$, we define $\T^n F:= L^n FR^n$; $T^n \I$ is then the comonad $L^n R^n$. There is a natural map $t^nF: \T^n F \ra F$ which is the map from a hocolimit of a co-punctured diagram to its final entry. 
\end{defn}

The previous first step was to consider the natural model for a homotopy limit and its adjoint. In a similar way, we can consider the natural model for a homotopy colimit, following Dugger\cite[Section 8.10]{duggerPrimer}.  For more on tensoring diagrams, see section \ref{sec:bckgd}.

For any copunctured diagram $\X : \mathscr{P}^1 ([n]) \ra \Top$, we form its homotopy colimit by tensoring with the diagram $S \mapsto \Delta^S$ ; this is our $B(- \downarrow I)^{op}$ . Thanks to the fact that this is a tensor product, we get an adjunction very similar to the homotopy limit case. The right adjoint now sends $X$ in Top to the copunctured cube $[n]-S \mapsto X^{\Delta^S}$, which is the dual of sending $X$ to the punctured cube $S \mapsto X \x \Delta^S$. %

We next need the canonical co-reduced diagram. The first step is to dualize the process of taking a space and producing a diagram 
\[
S \mapsto \colim (X \x \Delta^S \la X \x S \ra S).
\]

We are using the convention now of indexing our diagrams in $\mathscr{P}^1([n])$ by sets in $\mathscr{P}_0([n])$ by considering where we send $[n]-S$ for varying $S$ in $\mathscr{P}_0([n])$. Dualizing the construction for reduced diagram yields 
\[
 [n]-S \mapsto \lim \left( \begin{array}{c}\xymatrix{ & \prod_S\ast  \ar[d]\\ \X([n]-S) \ar[r]& \prod \limits_{s\in S} \X([n]-s)} \end{array}\right).
\]
The map into the product is the map induced by each map from the original diagram between $X([n]-S)$ and $X([n]-s)$ for all $s \in S$. The map from $\ast =\prod \limits_{s \in S} \ast$ is the choice of a point in each copy of $\X([n]-s)$.  If each space is based, this is the base point. If not, we need to start with connected spaces and have made a choice of base point. This map is the opposite of collapsing each space in $\coprod \limits_{s \in S} \X(s)$ to the point indexing it. Pre-composing with the right adjoint to hocolim yields 
\[
R^n: X \mapsto \left( [n]-S \mapsto \lim \left( \begin{array}{c}\xymatrix{ & \ast \ar[d]\\ X^{\Delta^{S}} \ar[r]& \prod \limits_{s\in S} X} \end{array}\right)\right).
\]
We using that $X^{\Delta^0} \cong X$, so the final element is unchanged. 
Note that this limit is a model for the homotopy limit of the inner diagram, where $X$ is replaced by $X^{\Delta^S}$ and the bottom map is a fibrant replacement of the diagonal. Then $L^n$ is inclusion followed by hocolim. 

For $S=[n]$, the above construction produces $X$.  For every singleton $s \in S$, we get $P_s X$, i.e. paths in $X$ based at whatever point was chosen by the map of $s$ into $X$. If $X$ is based already, these are all copies of the ``normal'' based path space, $P_\ast X$. Then we have at the $S=\{i,j\}$-indexed spoos loops on $X$.

For example, for $X$ based and $[n]=\{0,1\}$, $R^1 X$ is
\[
P_\ast X \la \Omega X \ra P_\ast X.
\]

\section{Dual Goodwillie Calculus}\label{sec:bkDGC} 


In this section, we further develop the dual calculus theory, which includes proving Theorem  \ref{thm:main-dual}, which states that $P^nF$, the holim of the iterated $\T^nF$'s as we have defined them, is in fact the $n$-co-excisive approximation to a functor $F$ (when $F$ takes values in Spectra).

We first point out that the original form of the dual calculus and results derived therefrom may be found in \cite{Dual, kuhn-tate, bauervanishing}. A dual tower has stages which naturally map to the functor. Some caution should be made in statements about the dual tower. The version of  $\P^n F$ for any functor from spaces to spectra and any $n\geq 0$ found in \cite{Dual} will be contractible on any space with a finite Postnikov tower. For example, it will vanish on $S^1$, though not necessarily on $S^2$.

One may form the Eckmann-Hilton dual of Goodwillie's calculus theory, switching cocartesian to cartesian everywhere. That is, 

\begin{defn}
For $F$ a homotopy endofunctor of spaces, $F$ is \emph{co-excisive} if it takes homotopy pullbacks to homotopy pushouts and \emph{n-co-excisive} if it takes strongly cartesian $(n+1)$-cubes to cocartesian ones. 
\end{defn}

To prove Theorem  \ref{thm:main-dual}, we will first establish the dual of a key lemma needed to show that the approximations do in fact take strongly cartesian cubes to cocartesian ones, and leave further development of this theory and its background to future work. 

The key lemma is the counterpart of \cite[Lemma 1.9]{GC3}, which shows that the map $t^n : \T^n F \ra F$ factors through some co-cartesian cube. In \cite{GC3}, Goodwillie combined the original Lemma with commutativity of finite pullbacks with filtered colimits to conclude that the construction producing $\P_n F$  produces a homotopy limit cube from a strongly cocartesian $(n+1)-$cube. 

However, it is important to note that we cannot always commute finite pushouts with (co)filtered homotopy limits of spaces. We choose currently to resolve the issue of commuting finite pushouts with (co)filtered homotopy limits by restricting to functors taking values in spectra if we need to consider $\P^n F$. Then
these approximations $\P^nF$ do take strongly cartesian cubes to cocartesian ones, as we will show.

Recall that $\T^nF$ is given by our dualization of the functors we use to decompose $T_n$. %
That is, we (in Definition \ref{defn:dualadj}) let $\T^n F:= L^n  F  R^n$ and $(\T^n)^k F := (L^n)^k F  (R^n)^k$. 

\noindent Recall (from section \ref{sec:dualproofs}) that $R^n (X)$ is 
\[
X \mapsto \left( [n]-S \mapsto \lim \left( \begin{array}{c}\xymatrix{ & \ast \ar[d]\\ X^{\Delta^{S}} \ar[r]& \prod \limits_{s\in S} X} \end{array}\right)\right)
\]

\noindent and $L^n$ is the inclusion followed by hocolim.

The adjunctions are between these categories as follows:
\[
\xymatrix{
R^n: \utilde{\text{Fun}}(\mathscr{P}^1 ([n]), \Top) \ar[r]_-{inc}& \ar@<-5 pt>[l]_{cor}  \Fun (\mathscr{P}^1([n]), \Top) \ar[rr]_-{\hocolim} && \ar@<-5 pt>[ll]_-{[n]-S \mapsto X^{\Delta^S}} \Top :L^n \\
}
\]

We then construct the $n$-co-excisive approximation,  
\[
\P^n F(X) := \holim (\cdots\ra(\T^n)^2 F(X)\ra \T^n F(X)).
\]

To show that this is an $n$-co-excisive approximation to $F$, we first need to show that it takes strongly cartesian diagrams to cartesian ones (i.e. Theorem \ref{thm:main-dual}). We do this by dualizing the proof which was provided by Charles Rezk \cite{rezk} of Lemma 1.9 of \cite{GC3}.

\begin{prop}(Dual of \cite[Lemma 1.9]{GC3})\label{lem:ncoexc}
 Let $\mathcal{X}$ be any strongly cartesian $(n+1)$-cube and $F$ be any homotopy functor. The map of cubes $t^n F(\mathcal{X}): \T^n F(\mathcal{X}) \ra F(\mathcal{X})$ factors through some cocartesian cube.
\end{prop}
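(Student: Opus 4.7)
The plan is to dualize Charles Rezk's argument for the original Lemma 1.9 of \cite{GC3}, replacing pushouts with pullbacks and joins with the path-space-like constructions that appear in $R^n$. The target is to produce an intermediate cocartesian $(n+1)$-cube $\mathcal{Z}$ sitting between $\T^n F(\mathcal{X})$ and $F(\mathcal{X})$ through which $t^n F(\mathcal{X})$ factors naturally.

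First I would unfold the relevant definitions. By construction, $\T^n F(Y) = L^n F R^n(Y) = \hocolim_{[n]\setminus S \in \mathscr{P}^1([n])} F(R^n(Y)([n]\setminus S))$, where $R^n(Y)$ is the strongly cartesian copunctured cube of ``iterated path-space-like'' objects over $Y$, and the natural transformation $t^n F \colon \T^n F \to F$ arises from the maps $R^n(Y)([n]\setminus S) \to Y$ that express $Y$ as the formal terminal vertex to which the copunctured cube maps.

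Next, writing $X = \mathcal{X}([n])$ and using the canonical maps $\mathcal{X}(V) \to X$ coming from the cube structure, I would define the intermediate cube by
\[
\mathcal{Z}(V) \;=\; \hocolim_{[n]\setminus S \in \mathscr{P}^1([n])} F\!\left(\mathcal{X}(V) \times_{X} R^n(X)([n]\setminus S)\right),
\]
the homotopy pullback being taken over $X$. There is an evident map of cubes $\T^n F(\mathcal{X}) \to \mathcal{Z}$, induced position-wise by the canonical factorization $R^n(\mathcal{X}(V))([n]\setminus S) \to \mathcal{X}(V) \times_{X} R^n(X)([n]\setminus S)$ arising from naturality of $R^n$; and a map $\mathcal{Z} \to F(\mathcal{X})$ induced by the projection of the pullback onto $\mathcal{X}(V)$ followed by the fact that the $\hocolim$ of a constant diagram on $F(\mathcal{X}(V))$ over the connected indexing category $\mathscr{P}^1([n])$ collapses to $F(\mathcal{X}(V))$. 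A direct check would show the composite is $t^n F(\mathcal{X})$.

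The heart of the argument, and the main obstacle, would be verifying that $\mathcal{Z}$ is cocartesian. This uses the strongly cartesian hypothesis on $\mathcal{X}$ in an essential way: the $(n+1)$-cube $V \mapsto \mathcal{X}(V) \times_X R^n(X)([n]\setminus S)$ inherits from $\mathcal{X}$ the property that each $2$-face is cartesian (for each fixed $S$), and dually to Rezk's observation that a pushout of joins is itself expressible as a join/pushout, I would argue that the cube of homotopy colimits is cocartesian by exchanging the outer $V$-cube colimit with the $\mathscr{P}^1([n])$-colimit and recognizing the $V$-direction as an iterated homotopy cofiber along the ``coreduced'' fibers of $R^n(X)$. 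This exchange-and-identification step is where one must be careful with commuting finite pushouts and finite homotopy limits of spaces (exactly the issue flagged in the body of the paper before restricting to spectra); however, for the present statement only cocartesianness of the final $(n+1)$-cube is needed, not a limit-colimit interchange in full generality, so one can hope to carry it out on the level of spaces by direct inspection of the two faces through each $V \subsetneq [n]$. Once cocartesianness of $\mathcal{Z}$ is established, naturality in $V$ of both factorization maps and their composite yields the proposition.
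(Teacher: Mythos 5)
Your overall strategy (dualize Rezk's proof of \cite[Lemma 1.9]{GC3}) is the right one, and your two factorization maps are plausible, but the choice of intermediate cube is wrong, and the step you yourself flag as ``the main obstacle'' --- cocartesianness of $\mathcal{Z}$ --- genuinely fails for that choice. The cube $V \mapsto \mathcal{X}(V)\times_X R^n(X)([n]\setminus S)$ is just a base change of $\mathcal{X}$ along $R^n(X)([n]\setminus S)\to X$: it is strongly cartesian but in no way degenerate, and after applying an arbitrary homotopy functor $F$ there is no reason for the resulting $(n+1)$-cube, or its homotopy colimit over $S$, to be cocartesian. Already for $n=1$ and $|S|=1$ your vertex is $F(\hofib(\mathcal{X}(V)\to X))$, and the square of homotopy fibers of a cartesian square is again cartesian, not cocartesian. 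The proposed fix --- exchanging the $V$-colimit with the $\mathscr{P}^1([n])$-colimit and ``recognizing the $V$-direction as an iterated homotopy cofiber'' --- cannot work, because that exchange would have to pass through the arbitrary functor $F$ sitting between the two colimits; this is precisely the interchange that $F$ obstructs.

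The mechanism that makes the dual argument work is degeneracy, not base change. Set $\mathcal{X}^U([n]-S):=\holim\bigl(\mathcal{X}([n]-S)\ra\prod_{u\in U}\mathcal{X}([n]-S)\la\prod_{u\in U}\mathcal{X}([n]-S-\{u\})\bigr)$, built from the \emph{adjacent vertices} of $\mathcal{X}$ rather than from fibers over the terminal vertex. Strong cartesianness of $\mathcal{X}$ gives $\mathcal{X}^U([n]-S)\simeq\mathcal{X}([n]-S-U)$, so for each nonempty $U$ the cube $[n]-S\mapsto\mathcal{X}([n]-S-U)$ is an identity map of $n$-cubes in each direction $u\in U$; this degeneracy survives application of \emph{any} homotopy functor $F$, and a cube that is an identity in some direction is automatically cocartesian. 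The intermediate cube is then $[n]-S\mapsto\hocolim_U F(\mathcal{X}^U([n]-S))$, a homotopy colimit of cocartesian cubes and hence cocartesian, and the natural maps $R^n(\mathcal{X}([n]-S))(U)\ra\mathcal{X}^U([n]-S)$ (the fibrant replacement of the diagonal factors through $\mathcal{X}([n]-S)$, and the basepoint map factors through $\prod_{u\in U}\mathcal{X}([n]-S-\{u\})$) supply the factorization of $t^nF$. You would need to replace your $\mathcal{Z}$ by this construction; it is not equivalent to yours, since homotopy fibers over the terminal vertex are not vertices of $\mathcal{X}$.
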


\noindent which we use to show\\

\begin{thm} \label{thm:main-dual}
With our definitions as in \ref{defn:dualadj}, and for functors $F$ taking values in Spectra, the functor given by 
\[
\P^n F(X) := \holim (\cdots\ra(\T^n)^2 F(X)\ra \T^n F(X)).
\]
is $n$-co-excisive. In the homotopy category, $p^n F: \P^n F \ra F$, induced by the map $t^n$ and its iterates, is the universal map to F from an $n$-co-excisive functor.
\end{thm}

\subsection{Proof of Prop \ref{lem:ncoexc}, Dual of \cite[Lemma 1.9]{GC3}}

To prove Prop \ref{lem:ncoexc}, we first need some setup and a lemma. 

Let $U \in [n]-S$ and define for each $U$ the cube $X^U$ as follows
\[
\mathcal{X}^U([n]-S) := \holim 
\left( 
\begin{array}{c}
\xymatrix{
 					& \prod \limits_{u \in U}\mathcal{X}([n]-S-\{u\}) \ar[d]\\
\mathcal{X}( [n]-S) \ar[r] &  \prod \limits_{u \in U}\mathcal{X}([n]-S)
}
\end{array}
\right)
\]

\begin{lem}\label{lem:mini}
If $\mathcal{X}$ is strongly cartesian, then $\mathcal{X}^U ([n]-S) \simeq \mathcal{X}([n]-S-U)$. 
\end{lem}

\begin{proof}
Since strongly cartesian is a property of the sub-2-faces, we will show this for an arbitrary sub-2-face of $\mathcal{X}$. Let $U=\{u_1, u_2\}$. Strongly cartesianness implies that the following is a homotopy pullback square

\[
\mathcal{X}([n]-S-\{u_1,u_2\}) =
\holim 
\scalebox{.85}{$\left(
\begin{array}{c}
\xymatrix{
& \mathcal{X}([n]-S-\{u_1\})\ar[d]\\
\mathcal{X}([n]-S-\{u_2\})\ar[r] & \mathcal{X}([n]-S)\\
}
\end{array}
\right)$}
\]

\noindent where we take as model for the holim the space of maps from $\Delta_{\leq 1}\circ c_1$ into this diagram, which is the same model as we used previously in this paper for the holim of a punctured square. 

We will show that $\mathcal{X}^{\{u_1, u_2\}}([n]-S) \simeq  \mathcal{X}([n]-S-\{u_1,u_2\})$.

\noindent We have that

\[
\mathcal{X}^{\{u_1, u_2\}}([n]-S)=
\holim 
\scalebox{.85}{$
\left(
\begin{array}{c}
\xymatrix{
 					& \mathcal{X}([n]-S-\{u_1\})\x  \mathcal{X}([n]-S-\{u_2\}) \ar[d]\\
\mathcal{X}( [n]-S) \ar[r] &  \mathcal{X}([n]-S)_{\{u_1\}}\x \mathcal{X}([n]-S)_{\{u_2\}}
}
\end{array}
\right)$}
\]

\noindent where the horizontal map 
 is the diagonal map and the vertical map is the inclusion of $ \mathcal{X}([n]-S-\{u_i\})$ into the $\{u_i\}-$indexed copy of $ \mathcal{X}([n]-S)$, with index denoted by subscript.

 Let us examine a point in the homotopy pullback, keeping in mind that a map into a product is determined by a map into each factor. An element of the homotopy pullback is the following data
 
 \begin{spacing}{1.25}
 \[
 \begin{array}{ll}
 x\in \mathcal{X}([n]-S) \\
 (y,z) \in  \mathcal{X}([n]-S-\{u_1\})\x  \mathcal{X}([n]-S-\{u_2\})\\
 (y', z') = \textrm{img}(y,z) \textrm{ in } \mathcal{X}([n]-S)_{\{u_1\}}\x \mathcal{X}([n]-S)_{\{u_2\}}\\
\gamma: I \ra  \mathcal{X}([n]-S)_{\{u_1\}}\x \mathcal{X}([n]-S)_{\{u_2\}}\\
 \end{array}
 \]
 \end{spacing}
 
where $\gamma$ may be expressed as a path in each coordinate, $\gamma = (\gamma_1, \gamma_2)$ such that 

\[
 \begin{array}{lll}
\gamma_1: I \ra \mathcal{X}([n]-S)_{\{u_1\}} & \gamma_1 (0) = y' &\gamma_1(1)=x\\
\gamma_2: I \ra \mathcal{X}([n]-S)_{\{u_2\}} & \gamma_2 (0) = x &\gamma_2(1)=z'\\
 \end{array}
 \]
 
 The point $x$ was then effectively superfluous.  Note that we now have $\widetilde{\gamma}$ as the concatenation $\gamma_1 \join \gamma_2$,  between $y'$ and $z'$ in $\mathcal{X}([n]-S)$ This yields the corresponding point in the homotopy limit given in the definition of $\mathcal{X}([n]-S-\{u_1,u_2\}) $, which was the first diagram in this proof. There is a clear (up to homotopy) inverse to this process, and we conclude that the holims are equivalent.
\end{proof}

\begin{proof}[Proof of Lemma \ref{lem:ncoexc}] 
Given Lemma \ref{lem:mini},  we now point out how the map $t^n F$ factors through the cube $X^U$ discussed in the previous Lemma and why this cube will be cocartesian.

Rezk\cite{rezk} observes that there is a natural map $\mathcal{X}_U(S) \ra \mathcal{X}(S) \join U=L_n(\mathcal{X}(S))(U)$ which induces the factorization 
\[
t_n F(\mathcal{X}(S)) : F(\mathcal{X}(S)) \ra \holim \limits_{U \in \mathscr{P}_0([n])} F(\mathcal{X}_U(S)) \ra \T_n F(\mathcal{X}(S)).
\]
We exhibit the dual as a natural map $R^n(\mathcal{X}([n]-S))(U) \ra \mathcal{X}^U([n]-S)$, inducing a factorization
\[
\T^n F(\mathcal{X}([n]-S)) \ra  %
\hocolim_{U \in \mathscr{P}^1([n])}F(\mathcal{X}^U([n]-S)) %
 \ra F(\mathcal{X}([n]-S)).
\]

We provide the map after recalling the two objects involved:

\[
\scalebox{.85}{$
R^n(\mathcal{X}([n]-S))(U) =
\holim 
\left(
\begin{array}{c}
\xymatrix{
 & \ast \ar[d]\\
\mathcal{X}([n]-S)^{\Delta^U} \ar[r] & \prod_{u \in U} \mathcal{X} ([n]-S)
}
\end{array}
\right)$}
\]
and

\[
%
\scalebox{.85}{$
\mathcal{X}^U([n]-S) = \holim 
\left( 
\begin{array}{c}
\xymatrix{
 					& \prod \limits_{u \in U}\mathcal{X}([n]-S-\{u\}) \ar[d]\\
\mathcal{X}( [n]-S) \ar[r] &  \prod \limits_{u \in U}\mathcal{X}([n]-S)
}
\end{array}
\right)$}
\]

\noindent Comparing the diagrams, we note that $\mathcal{X}([n]-S)^{\Delta^U}\ra \prod_{u \in U} \mathcal{X} ([n]-S)$ is a fibrant replacement of the diagonal 
and factors naturally through $\mathcal{X}([n]-S)$ as a result.

The map $\ast \ra \prod_{u \in U} \mathcal{X} ([n]-S)$, as before, is the map to the basepoint. This factors through  $\prod \limits_{u \in U}\mathcal{X}([n]-S-\{u\})$.

\if false
and 
\[
\mathcal{X}^U([n]-S) := \holim 
\left( 
\begin{array}{c}
\xymatrix{
 					& \prod \limits_{u \in U}\mathcal{X}([n]-S-\{u\}) \ar[d]\\
\mathcal{X}( [n]-S) \ar[r] &  \prod \limits_{u \in U}\mathcal{X}([n]-S)
}
\end{array}
\right)
\]
\fi
\bigskip


We can consider $\mathcal{X}^U$ as two sub-cubes which differ by an element $\{u\} \in U$, we have that the maps $\mathcal{X}([n]-U-\{u\}) \ra \mathcal{X}([n]-U)$ are isomorphisms; for nonempty $U$, the cube is cocartesian.
\end{proof}

\subsection{Proof of Theorem \ref{thm:main-dual}}
There are three parts of this proof. First, that $\P^nF$ is actually $n$-co-excisive.  Then the existence and uniqueness of a map which `co' factors a map $v: P \ra F$ for $P$ some co-$n$-excisive functor. 

\subsubsection{Co-$n$-excisiveness} 
In \cite{GC3}, Lemma 1.9, the counterpart of Prop \ref{lem:ncoexc}, was then combined with commutativity of finite pullbacks with filtered colimits to conclude that applying $\hocolim (\T_n F \ra \T_n^2 F\ra \cdots)$ to a strongly cocartesian $(n+1)-$cube produced a cartesian cube. 

We cannot always commute finite pushouts with (co)filtered homotopy limits of spaces. Since our current aim is not a complete re-write of the dual calculus theory to endofunctors of spaces, we choose to resolve the issue of commuting finite pushouts with (co)filtered homotopy limits by restricting to functors taking values in spectra. 

Let $\X$ be a strongly cartesian $(n+1)$-cube. By Prop \ref{lem:ncoexc}, each of the maps of  the holim defining $P^nF\X$, 
\[
\holim ( \T^n F\X \ra (\T^n)^2F\X \ra \cdots),
\]
factors through some cocartesian cube. Then $\P^n F\X$ is equivalent to this sequential holim of cocartesian cubes.  Since pushouts and pullbacks agree in spectra, we commute finite pushouts with (co)filtered homotopy limits and conclude that the holim of cocartesian cubes is again cocartesian. That is, $\P^n F$ is $n$-co-excisive. 

\subsubsection{Existence of a co-factorization} 

We follow the proof in \cite{GC3}.  We first show uniqueness in a similar way. Let $P$ be some $n$-co-excisive functor and $P\overset{u}{\ra} F$ a weak map (a zig zag of maps is a``weak map"; it is a map in the homotopy category).  We then have a commutative square 

\[
\xymatrix{
\P^n F \ar[r]^{\P^n u} \ar[d]_{p^n P} & \P^n F \ar[d]^{p^n F}\\
P \ar[r]^{u} & F&.\\
}
\] 

Due to $n$-co-excisiveness of $P$, we get that $p^nP$ is invertible as a weak map, giving us our (in the homotopy category) co-factorization of $u$, re-writing the above square while taking into account this invertability:

\[
\xymatrix{
                                            & \P^n F \ar[d]^{p^n F}\\
\P^n F \simeq \ar[ur]P \ar[r]^{u} & F&.\\
}
\]

\subsubsection{Uniqueness of a co-factorization} 

We need to show that if $P$ is $n$-co-excisive, then a weak map $v: P \ra P^n F$ is determined by the composition $p^n F \circ v$ (that is, comes from a weak map $P \ra F$).

It suffices to show that in the following diagram of weak maps, those labeled with $\sim$ are in fact invertible,

\[
\xymatrix{
P^n P \ar[r]^{P^n v} \ar[d]^{\sim}_{p^n P} & P^n P^n F \ar[r]^{P^np^n F}_{\sim}\ar[d]^{p^n P^n F}_{\sim} & P^n F \ar[d]^{p^n P}\\
P \ar[r]^v  &P^n F \ar[r]^{p^n F} & F
}.
\]

Given the invertibility of these above maps,  $v$ is then determined by $P^n v$, which is determined by $P^np^n F \circ P^n v = P^n (p^n F \circ v)$, which is clearly determined by $p^n F \circ v$. 

Since $P$ and $P^nF$ are $n$-co-excisive, the vertical marked weak maps are invertible. For the remaining map $P^n (p^n F)$ to be an equivalence, it is sufficient for $P^n (t^n F)$ to be an equivalence, as $p^n$ is the map induced by taking the limit other the iterations of $t^n$). Then 
\[
\xymatrix{
P^n F & P^n T^n F \ar[l]_-{P^n (t^n F)} := P^n (L^n FR^n)
}
\]
Using that our functors take values in spectra, since $L^n$ is a finite hocolim and in spectra, holims commute with finite hocolims, we can pull $P^n$ past $L^n$
\[
P^n (L^n FR^n) \simeq L^n P^n F R^n
\]
and as $P^n F$ is $n$-co-excisive, it takes the strongly cartesian cube that $R^n$ outputs to a cocartesian one. That is, the composition 
\[
\xymatrix{
P^n F & &P^n T^n F \ar[ll]_-{P^n (t^n F)} := P^n (L^n FR^n)\simeq L^n P^n F R^n
}
\]
is an equivalence. That is, $P^n(t^n F)$ is an equivalence. 

%
\section{LS category and related corollaries} \label{sec:lscat}
%
%

This section provides further discussion of the relationship between LScategory and the constructions of Goodwillie calculus, including proofs and more details around the corollaries of Theorem \ref{thm:main-dual}.

There are three major equivalent notions of LS category for a space, those of Ganea, Whitehead and Hopkins. A contemporary proof of this equivalence may be found in  \cite[\S V Ch. 27]{felix}. Doeraene \cite[Theorem 3.11]{doeraene93} was the first to show 
the equivalence of the Ganea and Whitehead definitions. Hopkins in \cite{hopkins} establishes the equivalence of Ganea's and his notions.


This definition of symmetric LScategory \cite[Section 3, p221]{hopkins} is formally dual to Hopkins' definition of symmetric LScocat. He defined, for a given space $X$, a contravariant functor $F_n$, as the homotopy colimit of a co-punctured $(n+1)$ cube whose $A$-indexed position is homotopic to $\prod_{|A|} \Omega X$.  For each $n$, his $F_n$ is then our $\T^n \I$.  He constructs a directed system\footnote{Hopkins states \cite[p.91]{hopkinsthesis} that this $F_n$ sequence can be identified with the Milnor filtration of $X$ regarded as the classifying space of its loop space. } of these $F_n$, with the maps cofibrations: 
 
\[
 \hocolim F_1 \ra  \hocolim F_2 \ra \cdots \ra \hocolim F_n \ra \cdots .
\]

Which is then the first co-tower for the dual calculus as we have defined it. 
 

\subsection{Consequences and corollaries} 
Returning to our functor-calculus language and recalling that we defined $\T^nF$ as $L^n F R^n$ (Definition \ref{defn:dualadj}) where $L^n, R^n$ are natural dualizations of $L_n, R_n$, we re-state Hopkins's relevant definitions as 

\begin{prop}\label{defn:symcat}
For a space $X$, symmetric LS cat(X) $\leq n$ if and only if  the natural map $\T^n\I (X)\ra X$ has a section up to homotopy.
\end{prop}
%

As we are \textit{defining} $\T^n$ this way, we did not strictly dualize Theorem \ref{thm:main}. 
However, given our above definitions,we have the following corollaries as consequences, which are the duals of those in Section \ref{sec:lscocat}. 


\begin{cor}\label{cor:dualTnI}
$\T^nF$ are left and right $\T^n \I$-functors, as are the $\P^nF$.
\end{cor}

\begin{proof} As with the analogous result (\ref{cor:TnI}), we establishing the result for $\T^nF$ and iterates implies it for the $\P^nF$ since a limit of objects which have sections to the same object also has such sections.

This corollary follows immediately from combining the definition of $\T^nF$ (Definition \ref{defn:dualadj}) with Proposition \ref{prop:RFL}.
\end{proof}

The remaining dual corollaries we only state for the iterated $\T^n$ constructions, since we have only dealt with $\P^n$ for functors which take values in spectra, and these results only make sense for functors which take values in spaces. Also keep in mind that as one iterates the $\T^n$'s, the input should have higher and higher connectivity: e.g. $(T^1)^kF = \Sigma^k F \Omega^k$. 

Given the corollary,  as before, this extra structures implies that\\

\begin{cor}\label{cor:dualsym}
$\T^nF$ takes values in spaces of symmetric LS cat $\leq n$, as do the higher iterates $(\T^n)^k F$.
\end{cor}

This proof is formally dual to the proof of Corollary \ref{cor:TnI}; we leave it for the interested reader. 

%
%

 
\begin{cor}\label{cor:CL}
The cup products of length $\geq n+1$ vanish for $\T^n F(X)$ and the higher iterates $(\T^n)^k F$.
\end{cor}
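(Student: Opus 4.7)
The plan is to chain together Corollary~\ref{cor:dualsym} with the cup-length inequality displayed just above the statement. First, I would invoke Corollary~\ref{cor:dualsym} to conclude that for every space $X$ and every $k\geq 1$, both $\T^n F(X)$ and $(\T^n)^k F(X)$ have symmetric LS category at most $n$. Second, I would use the standard fact that symmetric LS category dominates ordinary LS category (the symmetric version simply imposes an extra symmetry condition on the data of a categorical cover, equivalently on the retract diagram underlying Definition~\ref{defn:symcat}), so that $\mathrm{LScat}\bigl((\T^n)^k F(X)\bigr)\leq n$. Third, applying the displayed inequality $\text{cup-length}(X)\leq \mathrm{LScat}(X)$ yields $\text{cup-length}\bigl((\T^n)^k F(X)\bigr)\leq n$, which is precisely the assertion that every cup product of length $\geq n+1$ in $H^\ast\bigl((\T^n)^k F(X)\bigr)$ vanishes.

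There is essentially no homotopical obstacle here: the substantive content has already been packaged into Corollary~\ref{cor:dualsym}, and the remaining two steps are classical comparisons. The only point to record carefully, if one wants a self-contained argument, is the inequality symm LS cat $\geq$ LS cat; but this is immediate from Definition~\ref{defn:symcat}, since a (homotopy) section of $\T^n\I(X)\to X$ realizes $X$ as a retract of an iterated join/pushout of copies of a point, and forgetting the symmetry constraint leaves an ordinary categorical cover of length $n+1$. Thus the main work in proving Corollary~\ref{cor:CL} was already done upstream, and the present statement is essentially a formal consequence of Corollary~\ref{cor:dualsym} together with the Berstein--Ganea cup-length bound.
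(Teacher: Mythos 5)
Your proof is correct and follows essentially the same route as the paper: invoke Corollary~\ref{cor:dualsym} and then apply the Berstein--Ganea inequality $\text{cup-length}(X)\leq \mathrm{LScat}(X)$. Your extra remark comparing symmetric and ordinary LS category is a reasonable point of care that the paper leaves implicit (it relies on the stated equivalence of all notions of category, in contrast to cocategory).
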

\begin{proof}	
Similar to Whitehead length, we define cup-length to be the maximum length of non-trivial cup products (minus 1) in $H^\ast X$.  Combining Corollary \ref{cor:dualsym} with the inequality  
cup-length (X)  $\leq$ LS cat(X) of \cite{BersteinGanea},
we conclude our result. 
\end{proof}

\addcontentsline{toc}{part}{Bibliography}
\bibliographystyle{amsalpha}
\bibliography{LScocatbib.bib}

\providecommand{\bysame}{\leavevmode\hbox to3em{\hrulefill}\thinspace}
\providecommand{\MR}{\relax\ifhmode\unskip\space\fi MR }
\providecommand{\MRhref}[2]{%
  \href{http://www.ams.org/mathscinet-getitem?mr=#1}{#2}
}
\providecommand{\href}[2]{#2}
\begin{thebibliography}{DHKS05}

\bibitem[BD]{unBiedDwy}
G.~Biedermann and W.G. Dwyer, \emph{Homotopy nilpotent spaces}, in preparation.

\bibitem[BD10]{Bied-Dwy}
\bysame, \emph{Homotopy nilpotent groups}, Algebraic {\&} {G}eometric
  {T}opology \textbf{10} (2010), no.~1, 33--61.

\bibitem[BEJM15]{BEJM14}
K.~Bauer, R.~Eldred, B.~Johnson, and R.~McCarthy, \emph{{G}eometric models for
  {T}aylor polynomials of functors}, arxiv:1506.02112v1, 2015.

\bibitem[BG61]{BersteinGanea}
I~Berstein and T~Ganea, \emph{Homotopical nilpotency}, Illinois journal of
  {M}athematics \textbf{5} (1961), no.~1, 99--130.

\bibitem[BJM15]{BJM14}
K.~Bauer, B.~Johnson, and R.~McCarthy, \emph{A {C}otriple {M}odel for
  {C}alculus in an {U}nbased {S}etting (with an {A}ppendix by {R}osona {E}ldred
  )}, Transact. AMS \textbf{367} (2015), 6671--6718.

\bibitem[BK72]{BK}
A.K. Bousfield and D.M. Kan, \emph{Homotopy limits, completions and
  localizations}, Springer, 1972.

\bibitem[BM04]{bauervanishing}
K.~Bauer and R.~McCarthy, \emph{On vanishing {T}ate cohomology and
  decompositions in {G}oodwillie calculus}, preprint version (2004).

\bibitem[CS12]{chorny-scherer}
B.~Chorny and J.~Scherer, \emph{Goodwillie calculus and {W}hitehead products},
  To appear in {F}orum {M}athematicum. Arxiv preprint math/1109.2691. DOI
  10.1515/forum-2012-0038, 2012.

\bibitem[CSV15]{csv}
C.~Costoya, J.~Scherer, and A.~Viruel, \emph{A torus theorem for homotopy
  nilpotent groups}, arxiv: 1504.06100, 2015.

\bibitem[Del00]{comonads}
A.~Deligiannis, \emph{Ganea comonads}, Manuscripta {M}athematica \textbf{102}
  (2000), no.~2, 251--261.

\bibitem[DHKS05]{Dwyerhirschetc}
W.G. Dwyer, P.S. Hirschhorn, D.M. Kan, and J.H. Smith, \emph{Homotopy limit
  functors on model categories and homotopical categories}, vol. 113, Amer
  Mathematical Society, 2005.

\bibitem[DI04]{dugger-appendix}
D.~Dugger and D.C. Isaksen, \emph{Topological hypercovers and
  $a^1$-realizations}, Mathematische Zeitschrift \textbf{246} (2004), no.~4,
  667--689.

\bibitem[Doe93]{doeraene93}
J.~Doeraene, \emph{{LS}-category in a model category}, Journal of {P}ure and
  {A}pplied {A}lgebra \textbf{84} (1993), no.~3, 215--261.

\bibitem[Dug08]{duggerPrimer}
D.~Dugger, \emph{A primer on homotopy colimits}, Preprint, 2008.

\bibitem[Eld13]{me-AGT}
R.~Eldred, \emph{Cosimplicial models for the limit of the {G}oodwillie tower},
  Algebraic and {G}eometric {T}opology \textbf{13} (2013), 1161--1182.

\bibitem[FHT01]{felix}
Y.~F{\'e}lix, S.~Halperin, and J.C. Thomas, \emph{Rational homotopy theory},
  vol. 205, Springer Verlag, 2001.

\bibitem[Gan60]{GaneaLS}
T.~Ganea, \emph{{L}usternik-{S}chnirelmann category and cocategory},
  Proceedings of the {L}ondon {M}athematical {S}ociety \textbf{3} (1960),
  no.~1, 623.

\bibitem[Goo90]{GC1}
T.~Goodwillie, \emph{Calculus {I}: The first derivative of pseudoisotopy
  theory}, K-theory (1990), 1--27.

\bibitem[Goo91]{GC2}
\bysame, \emph{Calculus {II}: analytic functors}, K-theory (1991), 295--332.

\bibitem[Goo03]{GC3}
\bysame, \emph{Calculus {III}: {T}aylor series}, Geometry and {T}opology
  (2003), 645--711.

\bibitem[Hop84a]{hopkins}
M~J Hopkins, \emph{Formulations of cocategory and the iterated suspension},
  Ast\'{e}risque \textbf{113-114} (1984), 212--226, Algebraic homotopy and
  local algebra ({L}uminy, 1982).

\bibitem[Hop84b]{hopkinsthesis}
\bysame, \emph{Some problems in topology}, Ph.D. thesis, Oxford {U}niversity,
  1984.

\bibitem[JM04]{JM-cotriples}
B.~Johnson and R.~McCarthy, \emph{Deriving calculus with cotriples},
  Transactions of the {A}merican {M}athematical {S}ociety \textbf{356} (2004),
  no.~2, 757--803.

\bibitem[Kuh04]{kuhn-tate}
N.J. Kuhn, \emph{Tate cohomology and periodic localization of polynomial
  functors}, Inventiones mathematicae \textbf{157} (2004), no.~2, 345--370.

\bibitem[LS34]{LS}
L.~Lusternik and L.~Schnirelmann, \emph{Topological methods in the calculus of
  variations}, Actualit\'{e}s scientifiques et industrielles (1934).

\bibitem[May72]{May}
J.~P. May, \emph{The {G}eometry of {I}terated {L}oop {S}paces}, Lecture {N}otes
  in {M}athematics, vol. 271, Springer-Verlag, New York, 1972.

\bibitem[McC01]{Dual}
R.~McCarthy, \emph{Dual calculus for functors to spectra}, Contemporary
  {M}athematics \textbf{271} (2001), 183--216.

\bibitem[ML98]{maclane}
S.~Mac~Lane, \emph{Categories for the working mathematician}, vol.~5, Springer
  verlag, 1998.

\bibitem[Rez13]{rezk}
C.~Rezk, \emph{A streamlined proof of {G}oodwillie's n-excisive approximation},
  Algebraic and {G}eometric {T}opology \textbf{13} (2013), no.~2, 1049--1051.

\end{thebibliography}

\end{document}